\theoremstyle{theorem}
\newtheorem{theorem}{\sc Theorem}[section]
\newtheorem{lemma}[theorem]{\sc Lemma}
\newtheorem{proposition}[theorem]{\sc Proposition}
\newtheorem{corollary}[theorem]{\sc Corollary}
\theoremstyle{definition}
\newtheorem{definition}[theorem]{\sc Definition}
\theoremstyle{remark}
\newtheorem{remark}[theorem]{\sc Remark}
\renewcommand{\d}{\text{\rm d}}
\newcommand{\D}{\text{\rm D}}
\newcommand{\vep}{\varepsilon}
\newcommand{\e}{{\rm e}}
\newcommand{\R}{\mathbb{R}}
\newcommand{\N}{\mathbb{N}}
\newcommand{\Linv}{(-\Delta)^{-1}}
\newcommand{\lamq}{\lambda_q}
\newcommand{\lamo}{\lambda_0}
\newcommand{\vm}{|v|^{q-2}v}
\newcommand{\vmh}{|v|^{(q-2)/2}v}
\newcommand{\Lphi}{\mathcal L_\phi}
\begin{document}
\title[Rates of convergence for fast diffusion]{Rates of convergence\\to non-degenerate asymptotic profiles\\for fast diffusion via energy methods}
%
%
\author{Goro Akagi}
\address{Mathematical Institute and Graduate School of Science, Tohoku University, Aoba, Sendai 980-8578, Japan}
\email{goro.akagi@tohoku.ac.jp}
%
%
\maketitle
\begin{abstract}
This paper is concerned with a quantitative analysis of asymptotic behaviors of (possibly sign-changing) solutions to the Cauchy-Dirichlet problem for the fast diffusion equation posed on bounded domains with Sobolev subcritical exponents. More precisely, rates of convergence to non-degenerate asymptotic profiles will be revealed via an energy method. The sharp rate of convergence to \emph{positive} ones was recently discussed by Bonforte and Figalli~\cite{BF21} based on an entropy method. An alternative proof for their result will also be provided. Furthermore, dynamics of fast diffusion flows with changing signs will be discussed more specifically under concrete settings; in particular, exponential stability of some sign-changing asymptotic profiles will be proved in dumbbell domains for initial data with certain symmetry.
\end{abstract}

\section{Introduction}\label{S:Intro}

Let $\Omega$ be a bounded $C^{1,1}$ domain of $\mathbb R^N$ with boundary $\partial \Omega$. We are concerned with the Cauchy-Dirichlet problem
for the fast diffusion equation of the form,
\begin{alignat}{4}
\partial_t \left( |u|^{q-2}u \right) &= \Delta u
 \quad && \mbox{ in } \Omega \times (0, \infty),\label{eq:1.1}\\
 u &= 0 && \mbox{ on } \partial \Omega \times (0, \infty), \label{eq:1.2}\\
 u &= u_0 && \mbox{ in } \Omega \times \{0\},\label{eq:1.3}
\end{alignat}
where $\partial_t = \partial/\partial t$, under the assumptions that
\begin{equation*}
u_0 \in H^1_0(\Omega), \quad 2 < q < 2^* := \dfrac{2N}{(N-2)_+}.
\end{equation*}
The Cauchy-Dirichlet problem \eqref{eq:1.1}--\eqref{eq:1.3} arises from the Okuda-Dawson model (see~\cite{Okuda-Dawson}), which describes an anomalous diffusion of plasma (see also~\cite{BH78,BH82}). We refer the reader to~\cite[\S 2]{A16} for the definition of \emph{weak solutions} concerned in the present paper and their existence and regularity along with a couple of energy estimates (see also~\cite{Vaz06,Vazquez} as a general reference).

It is well known that every weak solution $u = u(x,t)$ of \eqref{eq:1.1}--\eqref{eq:1.3} vanishes at a finite time $t_*$, which is uniquely determined by the initial datum $u_0$ (see~\cite{Sabinina62,BC,Diaz88,HerreroVazquez88}); hence, we may write $t_* = t_*(u_0)$. Moreover, Berryman and Holland~\cite{BH80} proved that the rate of finite-time extinction of $u(\cdot,t)$ is just $(t_*-t)_+^{1/(q-2)}$ as $t \nearrow t_*$, that is,
$$
c_1 (t_* - t)_+^{1/(q-2)} \leq \|u(\cdot,t)\|_{H^1_0(\Omega)} 
\leq c_2 (t_* - t)_+^{1/(q-2)}
\quad  \mbox{ for all } \ t \geq 0
$$
with $c_1, c_2 > 0$, provided that $u_0 \not\equiv 0$ (see also~\cite{Kwong88-3,DKV91,SavareVespri,BGV08,BV10}). Then we define the asymptotic profile $\phi(x)$ of $u(x,t)$ as 
\begin{equation}\label{ap}
\phi(x) = \lim_{t\nearrow t_*} (t_*-t)^{-1/(q-2)} u(x,t) \not\equiv 0 \ \mbox{ in } H^1_0(\Omega).
\end{equation}
Apply the change of variables, 
\begin{equation}\label{cv}
v(x,s) = (t_* - t)^{-1/(q-2)} u(x,t)
\quad \mbox{ with } \  s = \log (t_*/(t_* - t)).
\end{equation}
Then $v=v(x,s)$ solves the following rescaled problem:
\begin{alignat}{4}
 \partial_s \left( |v|^{q-2}v \right) &= \Delta v + \lamq |v|^{q-2}v
\quad && \mbox{ in } \Omega \times (0, \infty),\label{eq:1.6}\\
 v &= 0 && \mbox{ on } \partial \Omega \times (0, \infty), \label{eq:1.7}\\
 v &= v_0 && \mbox{ in } \Omega \times \{0\}\label{eq:1.8}
\end{alignat}
with $\lamq := (q-1)/(q-2) > 0$ and the initial datum 
\begin{equation}\label{v0}
v_0 := t_*(u_0)^{-1/(q-2)}u_0.
\end{equation}
Here it is worth mentioning that such rescaled initial data form the set 
\begin{align}
\mathcal{X} &:= \{ t_*(u_0)^{-1/(q-2)}u_0 \colon u_0 \in H^1_0(\Omega) \setminus \{0\}\} \label{phase_set}\\
&= \{w \in H^1_0(\Omega) \colon t_*(w) = 1\}\nonumber
\end{align}
(see~\cite[Proposition 6]{AK13} for the equality) and it plays a role of the phase set in stability analysis of asymptotic profiles (see Definition \ref{D:stbl} below and~\cite{AK13} for more details). Now, the asymptotic profile $\phi(x)$ is reformulated as the limit of $v(x,s)$ as $s \to \infty$; moreover, profiles are characterized as nontrivial solutions to the stationary problem,
\begin{alignat}{4}
 - \Delta \phi &= \lamq |\phi|^{q-2}\phi \quad && \mbox{ in } \Omega,
\label{eq:1.10}\\
\phi &= 0 && \mbox{ on } \partial \Omega,
\label{eq:1.11}
\end{alignat}
and vice versa. On the other hand, although \emph{quasi-convergence} (i.e., convergence along a subsequence) of $v(\cdot,s)$ follows from a standard argument (see, e.g.,~\cite{BH80,Kwong88-3,DKV91,SavareVespri,AK13}), \emph{convergence} (along the whole sequence) is more delicate. Actually, it is proved in~\cite{FeiSim00} for non-negative bounded solutions with the aid of \L ojasiewicz-Simon's gradient inequality; however, it still seems open for possibly sign-changing solutions, unless asymptotic profiles are isolated in $H^1_0(\Omega)$ or $q$ is even (i.e., analytic nonlinearity). Moreover, in~\cite{BGV12}, convergence of relative errors for non-negative solutions is also proved, that is,
\begin{equation}\label{relerr-conv}
\lim_{t \nearrow t_*} \left\| \frac{u(\cdot,t)}{(t_*-t)^{1/(q-2)}\phi}-1\right\|_{C(\overline\Omega)} = \lim_{s \to \infty} \left\| \frac{v(\cdot,s)}{\phi}-1\right\|_{C(\overline\Omega)} = 0.
\end{equation}
Furthermore, rates of convergence are discussed in~\cite{BGV12}, where an exponential convergence of the so-called relative entropy (see Corollary \ref{C:entropy} below) was first proved; however, it seems still rather difficult to quantitatively estimate the rate of convergence. The \emph{sharp rate} (see below) of convergence for \emph{non-degenerate} (see below) positive asymptotic profiles was first discussed in~\cite{BF21} by developing the so-called \emph{nonlinear entropy method}. We also refer the reader to recent developments~\cite{JiXi01,JiXi02}.

Throughout this paper, as in~\cite{BF21}, we assume that $\phi$ is \emph{non-degenerate}, i.e., the linearized problem
$$
\Lphi(u) := -\Delta u - \lambda_q (q-1) |\phi|^{q-2} u = 0
$$
admits no non-trivial solution (or equivalently, $\Lphi$ does not have zero eigenvalue), and hence, $\Lphi$ is invertible. Then $\phi$ is also isolated in $H^1_0(\Omega)$ from the other solutions to \eqref{eq:1.10}, \eqref{eq:1.11}, that is, there exists a neighbourhood of $\phi$ in $H^1_0(\Omega)$ which does not involve any other solutions to \eqref{eq:1.10}, \eqref{eq:1.11}. We shall denote by $\{\mu_j\}_{j=1}^\infty$ the non-decreasing sequence consisting of all the eigenvalues for the eigenvalue problem,
\begin{equation}\label{wep}
 -\Delta e = \mu |\phi|^{q-2} e \ \mbox{ in } \Omega, \quad e = 0 \ \mbox{ on } \partial \Omega. 
\end{equation}
Then thanks to the spectral theory for compact self-adjoint operators (see, e.g.,~\cite{B-FA}), we find that $0 < \mu_1 < \mu_2 \leq \cdots \leq \mu_j \to +\infty$ as $j \to +\infty$. Moreover, the eigenfunctions $\{e_j\}_{j=1}^\infty$ form a complete orthonormal system (CONS for short) in $H^1_0(\Omega)$ and also a CONS in a weighted $L^2$ space $L^2(\Omega;|\phi|^{q-2}\d x)$ with different normalization. As for positive profiles $\phi$, a slightly different form of the eigenvalue problem \eqref{wep} has already been employed in~\cite{BF21} (see also Remark \ref{R:BF21} below).

As in~\cite[\S 2]{BF21}, the \emph{sharp rate} of convergence is defined for non-degenerate positive asymptotic profiles $\phi>0$ in view of a linearized analysis of \eqref{eq:1.6}--\eqref{eq:1.8}. More precisely, we consider the (formally) linearized equation (i.e., linearization of \eqref{eq:1.6}--\eqref{eq:1.8} at $\phi$),
\begin{alignat*}{4}
(q-1) \phi^{q-2} \partial_s h &= \Delta h + \lambda_q (q-1) \phi^{q-2} h \quad &&\mbox{ in } \Omega \times (0,\infty),\\
h &\,=\, 0 \ &&\mbox{ on } \partial \Omega \times (0,\infty),\\
h(\cdot,0) &\,=\, h_0 := v_0-\phi \ &&\mbox{ in } \Omega,
\end{alignat*}
where the solution $h = h(x,s)$ may correspond to the difference between $v(x,s)$ and $\phi(x)$. Then for a certain class of initial data $h_0$ the (linear) entropy
$$
\mathsf{E}[h(s)] = \int_\Omega h(x,s)^2 \phi(x)^{q-2} \, \d x
$$
turns out to decay at the exponential rate $\e^{-\lamo s}$ with the exponent
\begin{equation}\label{lambda}
\lamo = \frac 2 {q-1} \left[ \mu_k - \lambda_q(q-1) \right] > 0,
\end{equation}
where $k \in \N$ is the least integer, i.e., $\mu_k$ is the least eigenvalue for \eqref{wep}, such that $\mu_k > \lambda_q(q-1)$ (that is, $\nu_k := \mu_k - \lambda_q (q-1)$ is the least positive eigenvalue of $\Lphi$). Here and henceforth, the convergence rate mentioned above (or the exponent $\lambda_0$ as in \eqref{lambda}) is called a \emph{sharp rate}. In contrast with the porous medium equation (i.e., the case for $1 < q < 2$), which is studied in~\cite{ArPel81} by comparison arguments (see also~\cite{BSV15,BFV18,Vaz04} based on Global Harnack principle or entropy methods and~\cite[Theorem 3.4]{BGV12}, where an entropy method is developed for the PME), it is more difficult to directly prove the optimality of the convergence rate for \eqref{eq:1.6}--\eqref{eq:1.8} due to the nature of finite-time extinction phenomena of solutions for the fast diffusion equation. To be more precise, the major difficulty consists in comparing solutions with barriers near the extinction time; in particular, it is rather difficult to construct sub- and supersolutions that vanish at the same time as the solutions.

Define the energy functional $J : H^1_0(\Omega) \to \R$ by
\begin{equation}\label{J}
J(w) := \frac 12 \int_\Omega |\nabla w(x)|^2 \, \d x - \frac{\lambda_q}{q} \int_\Omega |w(x)|^q \, \d x
\end{equation}
for $w \in H^1_0(\Omega)$. We are ready to state main results of the present paper. 

\begin{theorem}[Convergence with rates to sign-changing profiles]\label{T:sc-conv}
Let $v = v(x,s)$ be a {\rm(}possibly sign-changing{\rm)} weak solution to \eqref{eq:1.6}--\eqref{eq:1.8} and let $\phi = \phi(x)$ be a {\rm(}possibly sign-changing{\rm)} nontrivial solution to \eqref{eq:1.10}, \eqref{eq:1.11} such that $v(\cdot,s_n) \to \phi$ strongly in $H^1_0(\Omega)$ for some $s_n \to +\infty$. Suppose that $\phi$ is non-degenerate. Let $\lambda$ be a constant satisfying
\begin{equation}\label{exp-est}
0 < \lambda < \frac 2{q-1} C_q^{-2} \|\phi\|_{L^q(\Omega)}^{-(q-2)} \frac{\mu_k - \lambda_q(q-1)}{\mu_k},
\end{equation}
where $\mu_k$ is the least eigenvalue for \eqref{wep} greater than $\lambda_q(q-1)$ and $C_q$ is the best constant of the Sobolev-Poincar\'e inequality,
\begin{equation}\label{SI}
\|w\|_{L^q(\Omega)} \leq C_q \|\nabla w\|_{L^2(\Omega)} \quad \mbox{ for } \ w \in H^1_0(\Omega).
\end{equation}
Then there exists a constant $C_\lambda > 0$ depending on the choice of $\lambda$ such that
\begin{equation}\label{c:J-exp:sc}
0 \leq J(v(s)) - J(\phi) \leq C_\lambda \e^{-\lambda s} \quad \mbox{ for } \ s \geq 0.
\end{equation}
Moreover, there exists a constant $M_\lambda > 0$ depending on the choice of $\lambda$ such that
$$
\|v(s)-\phi\|_{H^1_0(\Omega)}^2 \leq M_\lambda \e^{-\lambda s} \quad \mbox{ for } \ s \geq 0.
$$
\end{theorem}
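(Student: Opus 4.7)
The plan is to view \eqref{eq:1.6}--\eqref{eq:1.8} as a gradient flow, in the $|v|^{q-2}$-weighted $L^2$-metric, for the energy
\[
J(w):=\frac12\int_\Omega|\nabla w|^2\,\d x-\frac{\lamq}{q}\int_\Omega|w|^q\,\d x,
\]
and to prove exponential decay of the excess energy $D(s):=J(v(s))-J(\phi)$ via a Gr\"onwall-type differential inequality, from which \eqref{c:J-exp:sc} and then the $H^1_0$ rate will follow. First I would record the dissipation identity
\[
\frac{\d}{\d s}J(v(s))=-(q-1)\int_\Omega|v|^{q-2}(\partial_s v)^2\,\d x,
\]
obtained by testing \eqref{eq:1.6} with $\partial_s v$. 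Together with the standard quasi-convergence of $v(s)$ along a subsequence, the isolation of $\phi$ (guaranteed by non-degeneracy), and the tacit assumption that $\phi$ is an accumulation point of $v(s)$, this shows $J(v(s))\searrow J(\phi)$, and so $D(s)\geq 0$.

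The heart of the argument is the coercivity bound
\[
D(s)\;\leq\;\frac{q-1}{\lambda}\int_\Omega|v|^{q-2}(\partial_s v)^2\,\d x
\]
for $s$ large and any $\lambda$ in the range \eqref{exp-est}; combined with the dissipation identity this yields $\frac{\d}{\d s}D(s)\leq-\lambda D(s)$, and Gr\"onwall's lemma delivers \eqref{c:J-exp:sc}. I plan to derive the coercivity by expanding $J$ around $\phi$ via a second-order Taylor identity, producing $\tfrac12\langle\Lphi(v-\phi),v-\phi\rangle$ plus a cubic remainder (to be absorbed using \eqref{SI}), and by rewriting $(q-1)|v|^{q-2}\partial_s v=\Delta(v-\phi)+\lamq(|v|^{q-2}v-|\phi|^{q-2}\phi)$, which linearizes to $-\Lphi(v-\phi)$. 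Decomposing $v-\phi$ in the CONS $\{e_j\}$ of \eqref{wep} (orthonormal in $H^1_0$) and using the dual orthogonality $\int_\Omega|\phi|^{q-2}e_je_k\,\d x=\delta_{jk}/\mu_j$ allows the quadratic pieces to be matched mode-by-mode, producing the spectral factor $\min_j|\mu_j-\lamq(q-1)|/\mu_j$; the Sobolev-Poincar\'e inequality \eqref{SI} supplies the normalization $C_q^{-2}\|\phi\|_{L^q(\Omega)}^{-(q-2)}$ when passing between the $|\phi|^{q-2}$-weighted $L^2$-norm and the $H^1_0$-norm of $v-\phi$. The $H^1_0$-convergence with exponential rate would then be obtained by reusing the bound $\int_\Omega|v|^{q-2}(\partial_s v)^2\,\d x\leq C\e^{-\lambda s}$: time-integration yields a Cauchy estimate for $v(s)$ in a weighted $L^2$-norm, whose limit is pinned to $\phi$ by isolation, and parabolic regularity promotes the convergence to the $H^1_0$-topology.

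The main obstacle is the sign-changing case. When $\phi$ changes sign, $\Lphi$ is \emph{indefinite}, the convex Lyapunov argument that underlies the treatment of positive profiles in~\cite{BF21} breaks down, and both stable and unstable eigendirections of $\Lphi$ must be controlled at once --- this is exactly why the absolute value appears in \eqref{exp-est}. Absorbing the cubic Taylor remainder uniformly in $s$ therefore requires an $L^\infty$-type smallness of $v(s)-\phi$, the sign-changing analogue of \eqref{relerr-conv}, which must be established along the way using parabolic regularity together with the $H^1_0$-compactness of the orbit, and this bootstrap constitutes the main technical point of the proof.
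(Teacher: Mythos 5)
Your overall plan is the same as the paper's: combine the dissipation identity with a coercivity bound of the form $J(v(s))-J(\phi)\leq\frac{q-1}{\lambda}\int_\Omega|v|^{q-2}(\partial_s v)^2\,\d x$ and close with Gr\"onwall, then upgrade to $H^1_0$-convergence. The paper packages the coercivity step as an explicit \L ojasiewicz--Simon-type gradient inequality (Lemma~\ref{L:GI}), $|J(w)-J(\phi)|^{1/2}\leq\omega\|J'(w)\|_{H^{-1}(\Omega)}$ with $\omega>\|\Lphi^{-1}\|^{1/2}/\sqrt2$, proved from $C^2$-Taylor expansion alone, and then separately computes $\|\Lphi^{-1}\|_{\mathscr L(H^{-1},H^1_0)}=\max_j|\mu_j/\nu_j|$ via the CONS $\{e_j\}$ in \S\ref{S:que}; your mode-by-mode matching is essentially this computation carried out inline, and the Sobolev factor $C_q^{-2}\|\phi\|_{L^q}^{-(q-2)}$ enters exactly as in \eqref{2}, via $L^{q'}\hookrightarrow H^{-1}$ applied to $J'(v)=-(q-1)|v|^{q-2}\partial_s v$. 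So the route is materially the same, even though the paper's abstraction is cleaner.

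Two points in your writeup deserve correction. First, you identify the need for an $L^\infty$-type smallness of $v(s)-\phi$ (a sign-changing analogue of \eqref{relerr-conv}) as ``the main technical point,'' to absorb the Taylor remainder. This is not needed for Theorem~\ref{T:sc-conv}: since $2<q<2^*$, the energy $J$ is $C^2$ on $H^1_0(\Omega)$ and the remainder in $J(\phi+h)=J(\phi)+\frac12\langle\Lphi h,h\rangle+E(h)$ satisfies $E(h)=o(\|h\|_{H^1_0}^2)$ by $H^1_0$-smallness alone (this is exactly \eqref{a:e} in the Appendix), and because \eqref{exp-est} is a strict inequality this $o$-remainder suffices. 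The relative-error control and the extra $X_1,X_2$ machinery are only required for the \emph{sharp} rate of Theorem~\ref{T:main}, for non-negative solutions. Second, your last step --- promoting the weighted $L^2$ Cauchy estimate to $H^1_0$-convergence via ``parabolic regularity'' --- is vague and is not how the paper proceeds: the paper integrates the bound on $\|\partial_s(|v|^{q-2}v)\|_{H^{-1}}$ to get exponential decay of $\||\phi|^{q-2}\phi-|v|^{q-2}v\|_{H^{-1}}$, applies Tartar's inequality to get $\|v(s)-\phi\|_{L^q}\to0$ exponentially, and then uses the elementary identity $\frac12\|\nabla v\|^2=J(v)+\frac{\lamq}{q}\|v\|_{L^q}^q$ together with $-\Delta\phi=\lamq|\phi|^{q-2}\phi$ to close the $\|\nabla(v-\phi)\|_{L^2}$ estimate, with no regularity theory for the parabolic equation required.
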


It is noteworthy that Theorem \ref{T:sc-conv} is concerned with \emph{possibly sign-changing} weak solutions to \eqref{eq:1.6}--\eqref{eq:1.8} and their limits, i.e., nontrivial solutions to \eqref{eq:1.10}, \eqref{eq:1.11}. It is well known that \eqref{eq:1.10}, \eqref{eq:1.11} admits infinitely many sign-changing solutions in general (see, e.g.,~\cite{Rabinowitz}). Moreover, in Section \ref{S:sc-beh}, we shall exhibit several examples of sign-changing initial data $u_0$ and domains $\Omega$ for which the (sign-changing) weak solutions $u = u(x,t)$ to \eqref{eq:1.1}--\eqref{eq:1.3} admit sign-definite and sign-changing asymptotic profiles, although sign-changing asymptotic profiles are often unstable (see~\cite{AK13}).

As a by-product of the theorem above, we can also prove \emph{exponential stability} of non-degenerate asymptotic profiles which takes the least energy among all the profiles. Let us first recall the notion of stability and instability of asymptotic profiles for fast diffusion, which was introduced in~\cite{AK13} (see also~\cite{INdAM13,AK14,A16}) and will also be used in \S \ref{S:sc-beh}. Here $\mathcal{X}$ is the phase set defined in \eqref{phase_set}.

\begin{definition}[Stability and instability of asymptotic profiles (cf.~\cite{AK13})]
\label{D:stbl}
{\ }Let $\phi$ be an asymptotic profile of a weak solution to \eqref{eq:1.1}--\eqref{eq:1.3} {\rm (}equivalently, a nontrivial solution to \eqref{eq:1.10}, \eqref{eq:1.11}{\rm )}.
\begin{enumerate}
\item $\phi$ is said to be \emph{stable}, if
      for any $\varepsilon>0$ there exists $\delta > 0$ such
      that any solution $v$ of \eqref{eq:1.6}, \eqref{eq:1.7} satisfies
	       \begin{equation*}
		\sup_{s \in [0, \infty)}
		 \|v(s) -\phi\|_{H^1_0(\Omega)} < \varepsilon,
	       \end{equation*}
	     whenever $v(0) \in \mathcal X$ and $\|v(0) - \phi\|_{H^1_0(\Omega)}<\delta$.
\item $\phi$ is said to be \emph{unstable}, if $\phi$ is not stable.
\item $\phi$ is said to be \emph{asymptotically stable}, if $\phi$ is stable,
      and moreover, there exists $\delta_0 > 0$
      such that any solution $v$ of \eqref{eq:1.6}, \eqref{eq:1.7}
      satisfies
      \begin{equation*}
       \lim_{s \nearrow \infty}\|v(s) - \phi\|_{H^1_0(\Omega)} = 0,
      \end{equation*}
      whenever $v(0) \in \mathcal X$ and $\|v(0) - \phi\|_{H^1_0(\Omega)}<\delta_0$.
\item[(iv)] $\phi$ is said to be \emph{exponentially stable}, if $\phi$ is stable, and moreover, there exist constants $C, \mu, \delta_1 > 0$ such that any solution $v$ of \eqref{eq:1.6}, \eqref{eq:1.7} satisfies
$$
\|v(s)-\phi\|_{H^1_0(\Omega)} \leq C \e^{-\mu s} \quad \mbox{ for all } \ s \geq 0,
$$
provided that $v(0) \in \mathcal X$ and $\|v(0)-\phi\|_{H^1_0(\Omega)} < \delta_1$.
\end{enumerate}
\end{definition}

In what follows, the \emph{least-energy solutions} to \eqref{eq:1.10}, \eqref{eq:1.11} (or \emph{least-energy asymptotic profiles}) mean nontrivial solutions to \eqref{eq:1.10}, \eqref{eq:1.11} minimizing the energy $J$ among all the nontrivial solutions to \eqref{eq:1.10}, \eqref{eq:1.11}.

\begin{corollary}[Exponential stability of non-degenerate least-energy profiles]\label{C:exp-stbl}
{\ }Non-degenerate least-energy asymptotic profiles $\phi$ are exponentially stable in the sense of Definition \ref{D:stbl}. In particular, for any $\lambda$ satisfying \eqref{exp-est}, there exist constants $C, \delta_0 > 0$ such that any solution $v = v(x,s)$ of \eqref{eq:1.6}--\eqref{eq:1.8} satisfies
$$
\|v(s)-\phi\|_{H^1_0(\Omega)} \leq C \e^{-\lambda s/2} \quad \mbox{ for all } \ s \geq 0,
$$
provided that $v(0) \in \mathcal X$ and $\|v(0)-\phi\|_{H^1_0(\Omega)} < \delta_0$.
\end{corollary}

If we restrict ourselves to \emph{non-negative} weak solutions, we can derive more precise results. 

\begin{theorem}[Sharp convergence rate of energy]\label{T:main}
Let $v = v(x,s)$ be a non-negative weak solution of \eqref{eq:1.6}--\eqref{eq:1.8} and let $\phi$ be a positive solution to \eqref{eq:1.10}, \eqref{eq:1.11} such that $v(s_n) \to \phi$ strongly in $H^1_0(\Omega)$ for some $s_n \to +\infty$. Assume that $\phi$ is non-degenerate. Then there exists a constant $C > 0$ such that
\begin{equation}\label{sharp-rate}
0 \leq J(v(s))-J(\phi) \leq C \e^{-\lamo s} \quad \mbox{ for } \ s \geq 0, 
\end{equation}
where $\lamo >0$ is given as in \eqref{lambda}. 
\end{theorem}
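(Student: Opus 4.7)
My plan is to refine the energy method underlying Theorem \ref{T:sc-conv} by fully exploiting the uniform relative-error hypothesis $\|v(s)/\phi - 1\|_{C(\overline{\Omega})}\to 0$, which upgrades every weight $|v|^{q-2}$ into $\phi^{q-2}(1+o(1))$ as $s\to\infty$ and so allows the nonlinear problem to be compared pointwise with its linearization at $\phi$. Set $D(s) := J(v(s))-J(\phi)$; testing \eqref{eq:1.6} against $\partial_s v$ gives the dissipation identity $D'(s) = -(q-1)\int_\Omega |v|^{q-2}|\partial_s v|^2\,\d x \leq 0$, and since $D(s)\to 0$ under the hypothesis one gets $D(s)\geq 0$ for all $s$. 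The target is the differential inequality
\begin{equation*}
D'(s) \leq -(\lamo - \varepsilon(s))\,D(s) \quad \text{with } \varepsilon(s)\to 0,
\end{equation*}
from which Gronwall's lemma yields \eqref{sharp-rate}.

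Next, put $w := v-\phi$ and subtract \eqref{eq:1.10} from \eqref{eq:1.6} to obtain $\partial_s(|v|^{q-2}v - |\phi|^{q-2}\phi) = \Delta w + \lamq(|v|^{q-2}v - |\phi|^{q-2}\phi)$. The pointwise Taylor expansion $|v|^{q-2}v - |\phi|^{q-2}\phi = (q-1)|\phi|^{q-2}w + O(\phi^{q-3}w^2)$, valid uniformly in $x$ under the hypothesis, turns this into the linearized equation $(q-1)|\phi|^{q-2}\partial_s w = \Delta w + \lamq(q-1)|\phi|^{q-2}w$ up to remainders that are $o(1)$-small in the weighted $L^2$ norm $\int \phi^{q-2}(\,\cdot\,)^2\,\d x$. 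Simultaneously, a Taylor expansion of $J$ at $\phi$ gives $D = \tfrac12 J''(\phi)[w,w] + R(w)$ with $J'(\phi)=0$, $J''(\phi)[w,w] = \|\nabla w\|_{L^2}^2 - \lamq(q-1)\int_\Omega |\phi|^{q-2}w^2\,\d x$, and $|R(w)|\leq o(1)\cdot |J''(\phi)[w,w]|$. Diagonalizing in the CONS $\{e_j\}$ of \eqref{wep} and using $\int_\Omega|\phi|^{q-2}e_i e_j\,\d x = \delta_{ij}/\mu_j$, one writes $w = \sum_j c_j e_j$ and computes
\begin{equation*}
J''(\phi)[w,w] = \sum_j \frac{\mu_j - \lamq(q-1)}{\mu_j}\,c_j^2, \qquad (q-1)\int_\Omega|\phi|^{q-2}|\partial_s w|^2\,\d x = \sum_j \frac{(\mu_j - \lamq(q-1))^2}{(q-1)\mu_j}\,c_j^2.
\end{equation*}
Mode by mode on the stable subspace $\{\mu_j > \lamq(q-1)\}$, the ratio of the dissipation to $\tfrac12 J''(\phi)[w,w]$ equals $\frac{2(\mu_j - \lamq(q-1))}{q-1}$, whose infimum is exactly $\lamo$ by \eqref{lambda}, and combining this with the perturbative control of the nonlinear remainders closes the target inequality.

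The main obstacle is the presence of unstable modes $\{\mu_j < \lamq(q-1)\}$, which contribute with the \emph{wrong} sign to $J''(\phi)[w,w]$ and, in the linear flow, grow exponentially; if their amplitude were comparable to that of the stable modes they would destroy both the non-negativity of $D$ and the mode-wise spectral-gap estimate. I expect the resolution to come from the observation that the specific rescaling $v_0 = t_*(u_0)^{-1/(q-2)}u_0$ uses the \emph{true} extinction time and hence, together with $v/\phi\to 1$ and the non-degeneracy of $\phi$, pins the trajectory on the stable manifold of $\phi$, so that the projection of $w(s)$ onto the unstable eigenspace is of strictly higher order in the stable-subspace norm and can be absorbed into $\varepsilon(s)$. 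Making this slaving quantitative---either by an ODE estimate obtained after testing the equation for $w$ against the finite-dimensional unstable spectral projector, or by a \L ojasiewicz--Simon / invariant-manifold type input---is where the real subtlety of the proof lies; the remaining steps are standard Taylor expansions and weighted Poincar\'e inequalities whose sharp constants are read off from \eqref{wep}.
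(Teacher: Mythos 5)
Your overall framework (dissipation identity, Taylor expansion of $J$ at $\phi$, diagonalization in the weighted eigenbasis $\{e_j\}$ of \eqref{wep}, closing via a differential inequality for $D(s)=J(v(s))-J(\phi)$) matches the paper's, and the mode-wise spectral-gap computation is the right heuristic. However, there are two genuine gaps.

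\textbf{The unstable modes do not need to be slaved.} You identify the low modes $\{\mu_j<\lambda_q(q-1)\}$ as "the main obstacle'' and propose to control their amplitude via a stable-manifold / \L ojasiewicz--Simon slaving argument. That machinery is not in the paper and is not needed, because these modes enter the key inequality with a \emph{helpful} sign. Writing $\nu_j=\mu_j-\lambda_q(q-1)$ and expanding $J'(v(s))=\sum_j\sigma_j(s)(-\Delta e_j)$, one has
\begin{equation*}
J(v(s))-J(\phi)=\tfrac12\sum_j\sigma_j(s)^2\tfrac{\mu_j}{\nu_j}+E(s),
\end{equation*}
so the unstable terms $j<k$ contribute $\tfrac12\sigma_j^2\mu_j/\nu_j\le 0$ to $D(s)$, while they contribute $\mu_j\sigma_j^2\ge 0$ to the weighted dissipation $\|J'(v(s))\|^2_{L^2(\Omega;\phi^{2-q}\d x)}=\sum_j\mu_j\sigma_j^2$. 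Hence simply moving those terms to the left, using $\mu_j/\nu_j\le\mu_j/\nu_k$ for $j\ge k$, and then dropping them (they are nonnegative on the left, harmless on the right) gives $D(s)\le\bigl(\tfrac1{2\nu_k}+o(1)\bigr)\|J'(v(s))\|^2_{L^2(\Omega;\phi^{2-q}\d x)}$ with no projection estimate on $w$ whatsoever. This sign observation is the real hinge of the proof; invoking an invariant-manifold argument here replaces a one-line remark with a substantial and unnecessary extra theorem.

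\textbf{The error term must decay exponentially, not merely $o(1)$.} Your target inequality $D'(s)\le -(\lambda_0-\varepsilon(s))D(s)$ with only $\varepsilon(s)\to0$ yields, via Gronwall, $D(s)\le C_\delta\e^{-(\lambda_0-\delta)s}$ for every $\delta>0$ --- this is the \emph{almost} sharp rate of \S5, not \eqref{sharp-rate}. To kill the $\delta$ one needs $\varepsilon(s)$ integrable on $(0,\infty)$. The paper obtains this by a bootstrap: first the almost sharp rate gives exponential decay of $\|v(s)-\phi\|_{H^1_0}$ (Lemma~\ref{P:exp-conv}), which together with the smoothing estimate of~\cite[Theorem 4.1]{BF21} forces $\delta(s)=\|v(s)/\phi-1\|_{L^\infty}$ and the remainder factor $\beta(s)$ to decay exponentially; one then solves the improved inequality $H'(s)\le -\tfrac{2\nu_k}{q-1}H(s)+C\e^{-ds}H(s)$ directly. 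You would also need to address the case $2<q<3$, where $J''$ fails to be Fr\'echet differentiable at $\phi$ in $H^1_0(\Omega)$; the paper handles this in \S\ref{S:2<q<3} by working in the weighted spaces $X_1,X_2$ and exploiting the boundedness of $v(s)/\phi$, which your "standard Taylor expansions'' step glosses over.
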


The rate of convergence in \eqref{sharp-rate} is faster than \eqref{c:J-exp:sc} obtained in Theorem \ref{T:sc-conv} for (possibly) sign-changing solutions (see Remark \ref{R:leap} below). The preceding theorem yields the following corollary, which provides an alternative proof for~\cite[Theorem 1.2]{BF21}:

\begin{corollary}[Sharp convergence rate of relative entropy]\label{C:entropy}
Under the same assumptions as in Theorem \ref{T:main}, there exists a constant $C > 0$ such that
\begin{equation}\label{ent-conv}
\int_\Omega |v(x,s)-\phi(x)|^2 \phi(x)^{q-2} \, \d x \leq C \e^{-\lamo s} \quad \mbox{ for } \ s \geq 0,
\end{equation}
where $\lamo$ is given as in \eqref{lambda}.
\end{corollary}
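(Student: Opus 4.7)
The plan is to deduce the weighted $L^2$ estimate from the sharp energy bound of Theorem~\ref{T:main} by expanding $J$ to second order around $\phi$ and then exploiting the spectral structure of the weighted eigenvalue problem~\eqref{wep}. Set $h(\cdot,s) := v(\cdot,s) - \phi$. As a first step I would perform a pointwise Taylor expansion of $t \mapsto |t|^q$ at $t = \phi$ for $v = \phi + h$: since $\|v(s)/\phi - 1\|_{C(\overline{\Omega})} \to 0$, the remainder in the expansion is controlled in the weighted norm $\int_\Omega \phi^{q-2} h^2 \, \d x$, and after integrating and using the stationary equation $-\Delta \phi = \lamq \phi^{q-1}$ to cancel the linear cross term one obtains
\[
J(v(s)) - J(\phi) = \frac{1}{2}\int_\Omega |\nabla h|^2 \, \d x - \frac{\lamq(q-1)}{2}\int_\Omega \phi^{q-2} h^2 \, \d x + R(s),
\]
with $|R(s)| \leq \eta(s)\int_\Omega \phi^{q-2} h^2 \, \d x$ and $\eta(s) \to 0$ as $s \to \infty$.

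Next I would expand $h = \sum_j c_j(s) e_j$ in the weighted-$L^2$ orthonormal eigenbasis of \eqref{wep}, so that $\|\nabla h\|_{L^2}^2 = \sum_j \mu_j c_j^2$ and $\int_\Omega \phi^{q-2} h^2 \, \d x = \sum_j c_j^2$. The identity above rewrites as
\[
2[J(v(s)) - J(\phi)] = \sum_j \bigl(\mu_j - \lamq(q-1)\bigr) c_j^2 + O(\eta(s)) \sum_j c_j^2.
\]
Let $k$ be the least index with $\mu_k > \lamq(q-1)$, as in \eqref{lambda}. On the ``good'' subspace $\{e_j : j \geq k\}$ the spectral gap yields $\sum_{j \geq k} (\mu_j - \lamq(q-1)) c_j^2 \geq \tfrac{(q-1)\lamo}{2} \sum_{j \geq k} c_j^2$, so together with Theorem~\ref{T:main} the high-frequency part of the energy is already controlled: $\sum_{j \geq k} c_j(s)^2 \leq C\e^{-\lamo s} + C \sum_{j < k} c_j(s)^2$.

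The remaining and principal task is to bound the finite-dimensional ``bad'' subspace $\sum_{j < k} c_j^2$. Because $\phi$ itself is an eigenfunction of \eqref{wep} with $\mu_1 = \lamq < \lamq(q-1)$, the linearization of the flow is \emph{unstable} along the bad modes, yet $v(s)$ converges to $\phi$ nonetheless. This forces $v(s)$ to lie on the nonlinear stable manifold of $\phi$, on which the bad coordinates $c_j$, $j < k$, are smooth functions of the good ones vanishing quadratically at the origin. Making this quantitative---either via invariant-manifold arguments for the rescaled semiflow, or by projecting \eqref{eq:1.6} onto each bad eigenfunction, using the uniform closeness $v/\phi \to 1$ to replace $(q-1)|v|^{q-2}$ by $(q-1)\phi^{q-2}$ up to small multiplicative error, and analyzing the resulting ODE together with the fact that $c_j(s) \to 0$---I would obtain the a priori bound $\sum_{j < k} c_j(s)^2 \leq C\e^{-2\lamo s}$. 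Inserting this into the good-mode estimate gives $\sum_j c_j(s)^2 \leq C\e^{-\lamo s}$, i.e.\ \eqref{ent-conv}. This stable-manifold step is the delicate part: linear analysis predicts positive growth rates on the bad subspace, so the necessary decay is nonlinear in origin and reflects the implicit choice of extinction time $t_*(u_0)$ that places $v(s)$ on the stable manifold of $\phi$.
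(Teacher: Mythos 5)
Your proposed route is genuinely different from the paper's, and it contains a gap that is, in effect, the very obstacle the paper is designed to avoid.

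The paper's proof of Corollary~\ref{C:entropy} is a short integration argument. Combining the dissipation estimate~\eqref{J-EI} with the gradient-type inequality~\eqref{J-GI}, the author obtains
\[
\|J'(v(s))\|_{L^2(\Omega;\phi^{2-q}\d x)} \leq -C\,\frac{\d}{\d s}\big[J(v(s))-J(\phi)\big]^{1/2},
\]
integrates this from $s$ to $\infty$ to get $\|v^{q-1}(s)-\phi^{q-1}\|_{L^2(\Omega;\phi^{2-q}\d x)} \leq C[J(v(s))-J(\phi)]^{1/2}$, invokes Theorem~\ref{T:main}, and finally converts to $\int_\Omega|v-\phi|^2\phi^{q-2}\,\d x$ by the pointwise elementary inequality relating $|v-\phi|$ to $\phi^{2-q}|v^{q-1}-\phi^{q-1}|$ (valid since $v/\phi$ is uniformly close to~$1$). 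No mode-by-mode analysis of $v-\phi$ is needed at this stage.

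Your approach instead re-expands $J$ quadratically in $h=v-\phi$ and decomposes $h$ in the weighted eigenbasis, which inevitably produces the identity $2[J(v)-J(\phi)] \approx \sum_j \nu_j c_j^2$ with $\nu_j<0$ for $j<k$. To extract an upper bound on $\sum_j c_j^2$ from $J(v)-J(\phi)$ you are then forced to control the ``bad'' coefficients $c_j$, $j<k$, separately, and you propose to do so by an invariant-manifold or ODE argument which you do not carry out. This is a genuine gap, not a detail: the required statement that the bad modes decay at rate $\e^{-2\lamo s}$ is precisely the ``improvement of almost orthogonality along the nonlinear flow'' from \S3.2--3.6 of~\cite{BF21}, which the introduction of the present paper singles out as the most involved part of that work and which the energy method is explicitly constructed to bypass. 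Asserting ``I would obtain the a priori bound'' does not discharge it. The structural reason the paper never meets this difficulty is worth noting: when $J-J(\phi)$ is expressed through $\Lphi^{-1}J'(v)$ rather than through $\Lphi(v-\phi)$ (compare \eqref{J-expand} with your expansion), the bad modes enter with coefficients $\mu_j/\nu_j<0$ and therefore \emph{lower} $J-J(\phi)$; moving them to the left-hand side of~\eqref{J-GI} only strengthens the inequality $J(v)-J(\phi)\leq C\|J'(v)\|_{L^2(\Omega;\phi^{2-q}\d x)}^2$, so no independent bad-mode control is ever needed. Your second-order expansion in $v-\phi$ destroys this sign favorability, and that is where the stable-manifold burden comes from.
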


Thanks to the energy convergence (along with the entropic one), we can also derive the sharp convergence rate of the $H^1_0$-norm.

\begin{corollary}[Sharp convergence rate of $H^1_0$-norm]\label{C:H10}
Under the same assumptions as in Theorem \ref{T:main}, there exists a constant $C > 0$ such that
\begin{equation}\label{H10-conv}
\int_\Omega |\nabla v(x,s)- \nabla \phi(x)|^2  \, \d x \leq C \e^{-\lamo s} \quad \mbox{ for } \ s \geq 0,
\end{equation}
where $\lamo$ is given as in \eqref{lambda}. Moreover, it also holds that
\begin{equation}\label{J'-conv}
\left\| \partial_s \left( v^{q-1}\right)(s)\right\|_{H^{-1}(\Omega)}
= \left\| J'(v(s)) \right\|_{H^{-1}(\Omega)} \leq C \e^{-\frac\lamo 2 s}
\end{equation}
for $s \geq 0$.
\end{corollary}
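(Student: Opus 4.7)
The plan is to deduce both bounds of Corollary~\ref{C:H10} directly from the energy decay of Theorem~\ref{T:main} and the entropy decay of Corollary~\ref{C:entropy}, by means of a second-order Taylor expansion of $J$ about the critical point $\phi$ for \eqref{H10-conv}, followed by a first-order expansion of $J'$ about $\phi$ for \eqref{J'-conv}. The uniform relative-error assumption $\|(v/\phi)-1\|_{C(\overline\Omega)}\to0$ handles all nonlinear remainders via pointwise comparison with $\phi$.

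For \eqref{H10-conv}, I would combine the pointwise identity
\[
 \tfrac{1}{q}v^q - \tfrac{1}{q}\phi^q = \phi^{q-1}(v-\phi) + \tfrac{q-1}{2}\xi^{q-2}(v-\phi)^2,
\]
valid for some $\xi=\xi(x,s)$ lying between $v(x,s)$ and $\phi(x)$, with the quadratic identity for the Dirichlet energy; the profile equation $-\Delta\phi=\lamq\phi^{q-1}$ then cancels the linear cross-term and yields
\[
 J(v(s)) - J(\phi) = \tfrac{1}{2}\int_\Omega|\nabla(v-\phi)|^2\,\d x - \tfrac{\lamq(q-1)}{2}\int_\Omega\xi^{q-2}(v-\phi)^2\,\d x.
\]
For $s$ sufficiently large, the hypothesis gives $\xi \leq 2\phi$ pointwise, hence $\xi^{q-2}\leq 2^{q-2}\phi^{q-2}$, so Corollary~\ref{C:entropy} bounds the remainder integral by $Ce^{-\lamo s}$. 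Solving the identity for the $H^1_0$-seminorm squared of $v-\phi$ and invoking Theorem~\ref{T:main} yields \eqref{H10-conv} on $s\geq s_0$, with the initial segment absorbed into the constant by continuity.

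For \eqref{J'-conv}, observe first that $\|J'(v(s))\|_{H^{-1}} = \|\partial_s(v^{q-1})(s)\|_{H^{-1}}$ follows at once from \eqref{eq:1.6} rewritten as $\partial_s(v^{q-1}) = -J'(v)$. To control the $H^{-1}$-norm, I would use $J'(\phi)=0$ and test against $\varphi\in H^1_0(\Omega)$, expanding $v^{q-1}-\phi^{q-1}$ by the mean-value theorem:
\[
 \langle J'(v), \varphi\rangle = \int_\Omega\nabla(v-\phi)\cdot\nabla\varphi\,\d x - \lamq(q-1)\int_\Omega\xi^{q-2}(v-\phi)\varphi\,\d x.
\]
The first term is bounded by $\|\nabla(v-\phi)\|_{L^2}\|\varphi\|_{H^1_0}\leq Ce^{-\lamo s/2}\|\varphi\|_{H^1_0}$ via the already-established \eqref{H10-conv}. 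For the second, the uniform bound $\xi^{q-2}\leq 2^{q-2}\|\phi\|_{L^\infty}^{q-2}$ (using $\phi\in L^\infty(\Omega)$ in the subcritical regime) combined with Cauchy--Schwarz, \eqref{H10-conv}, and the embedding $H^1_0\hookrightarrow L^2$ again gives $Ce^{-\lamo s/2}\|\varphi\|_{H^1_0}$, whence \eqref{J'-conv} by taking the supremum over $\|\varphi\|_{H^1_0}\leq1$. The only non-trivial point in the whole argument is the pointwise control of the intermediate value $\xi$ by means of the uniform relative-error hypothesis, which is precisely tailored for this purpose; no further obstacle is anticipated.
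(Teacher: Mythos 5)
Your proposal is correct and reaches the same conclusion as the paper, but your handling of the remainder is a genuinely cleaner variant. The paper's proof of this corollary invokes the functional Taylor expansion of Sections~\ref{S:almost_sharp}--\ref{S:2<q<3}, writing
$J(v(s))-J(\phi) = \tfrac12\|\nabla(v-\phi)\|_{L^2}^2 - \tfrac{\lamq(q-1)}2\int_\Omega|v-\phi|^2\phi^{q-2}\,\d x + O(\|v-\phi\|_{H^1_0}^{2+\gamma})$,
where the error term comes from the (G\^ateaux) differentiability analysis of $J''$ in the auxiliary spaces $X_1, X_2$ when $2<q<3$; that $O$-term is then absorbed, using smallness of $\|v-\phi\|_{H^1_0}$ for large $s$, before Theorem~\ref{T:main} and Corollary~\ref{C:entropy} close the estimate. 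You instead use a pointwise Lagrange remainder for the scalar map $t\mapsto\tfrac1q t^q$ on $(0,\infty)$, so the second-order term appears directly with the weight $\xi^{q-2}$ and there is no leftover $O$-term to absorb; the uniform relative-error hypothesis gives $\xi\leq 2\phi$ for $s$ large, whence $\xi^{q-2}\leq 2^{q-2}\phi^{q-2}$ and the whole remainder is bounded by the entropy from Corollary~\ref{C:entropy}. This sidesteps the $2<q<3$ vs.\ $q\geq3$ distinction entirely and needs nothing beyond scalar calculus and positivity of $v$ and $\phi$. For \eqref{J'-conv}, the paper simply cites \eqref{J'-exp} ($J'(v)=\Lphi(v-\phi)+e(s)$ with $e(s)$ of higher order); your mean-value-theorem computation tested against $\varphi\in H^1_0(\Omega)$ is an explicit unpacking of the same inequality, again with the pointwise bound on $\xi^{q-2}$ replacing the abstract remainder estimate. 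Both approaches are sound; yours is more self-contained for this corollary, while the paper's is consistent with the expansion already set up for Theorem~\ref{T:main}.
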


Scaling back to the original variable, we can readily rewrite Corollaries \ref{C:entropy} and \ref{C:H10} as follows:
\begin{corollary}
Let $u = u(x,t)$ be a non-negative weak solution of \eqref{eq:1.1}--\eqref{eq:1.3} with a finite extinction time $t_* > 0$ and let $\phi$ be a positive solution to \eqref{eq:1.10}, \eqref{eq:1.11} such that $(t_* - t)^{-1/(q-2)} u(t) \to \phi$ strongly in $H^1_0(\Omega)$ as $t \nearrow t_*$. Assume that $\phi$ is non-degenerate. Then there exists a constant $C > 0$ such that
\begin{align}
\int_\Omega \left| \frac{u(x,t)}{(t_*-t)^{1/(q-2)}\phi(x)} - 1\right|^2 \phi(x)^q \, \d x &\leq C \left(\frac{t_* - t}{t_*}\right)^{\lamo},\label{rel_err_conv_u}\\
\int_\Omega \left| (t_*-t)^{-1/(q-2)}\nabla u(x,t) - \nabla \phi(x) \right|^2 \, \d x &\leq C \left(\frac{t_* - t}{t_*}\right)^{\lamo},\label{H10_conv_u}
\end{align}
where $\lamo$ is given as in \eqref{lambda}, for $t \in [0,t_*)$.
\end{corollary} 

The topology of convergence (with the sharp rate) in Corollary \ref{C:H10} seems slightly stronger than the main theorem of~\cite{BF21} (see Remark \ref{R:BF21} below); however, with the aid of a recent boundary regularity result (for \emph{non-negative} solutions on \emph{smooth} domains) established by~\cite{JiXi01}, convergences with the sharp rate in stronger topologies also follow from the relative error convergence in the weighted $L^2$ space obtained in~\cite{BF21} (see Corollary \ref{C:entropy}). On the other hand, the main results of the present paper will be proved in a different way, which relies on an \emph{energy method} rather than the entropy method and which may be much simpler than the method used in~\cite{BF21}. In particular, we can avoid the argument to prove some improvement of the ``almost orthogonality'' along the nonlinear flow (see \S 3.2-3.6 of~\cite{BF21}), which may be the most involved part of the paper~\cite{BF21}. Furthermore, it is also noteworthy that all the main results of the present paper can be proved for arbitrary bounded $C^{1,1}$ domains (see Remark \ref{R:C11} below for details). 

\begin{remark}[Comparison with~\cite{BF21}]\label{R:BF21}
Throughout this paper, we shall use the transformations \eqref{cv}, which are slightly different from those used in~\cite{BF21}. Moreover,~\cite{BF21} is concerned with an eigenvalue problem, which is also slightly different from \eqref{wep} and whose eigenvalues $\lambda_{V,k}$, $k \geq 1$ coincide with $\mu_j/t_*$ of the present paper for $\sum_{\ell=1}^{k-1} N_\ell < j \leq \sum_{\ell=1}^{k} N_\ell$ (here $N_\ell$ denotes the dimension of the $\ell$-th eigenspace), since the profile function $V$ used in~\cite{BF21} corresponds to $t_*^{1/(q-2)}\phi$ of ours. On the other hand, the sharp rate $\lamo$ as in \eqref{lambda} coincides with $2T \lambda_m$ as in~\cite{BF21} with $T = t_*$; hence, \eqref{ent-conv} and \eqref{rel_err_conv_u} are completely same as the assertion of~\cite[(1.15) of Theorem 1.2 and (1.18) of Remark 1.3]{BF21}.
\end{remark}

\bigskip
\noindent
{\bf Plan of the paper.} Sections \ref{S:csc}--\ref{S:exp-conv} are devoted to a proof for Theorem \ref{T:sc-conv}. Sections \ref{S:almost_sharp}--\ref{S:2<q<3} are concerned with a proof for Theorem \ref{T:main}. In Section \ref{S:Cor}, Corollaries \ref{C:exp-stbl}, \ref{C:entropy} and \ref{C:H10} will be proved. In Section \ref{S:sc-beh}, fast diffusion flows with changing signs are discussed; in particular, exponential stability of some sign-changing asymptotic profiles will be proved in dumbbell domains for initial data with certain symmetry. In Appendix, we shall recall Taylor's theorem for operators in Banach spaces as well as some fundamental inequalities. 

\bigskip
\noindent
{\bf Notation.} We denote by $C$ a generic non-negative constant which may vary from line to line. Moreover, $q' := q/(q-1)$ denotes the H\"older conjugate of $q \in (1,\infty)$. Furthermore, denote by $H^{-1}(\Omega)$ the dual space of the Sobolev space $H^1_0(\Omega)$ equipped with the inner product $(u,v)_{H^1_0(\Omega)} = \int_\Omega \nabla u \cdot \nabla v \, \d x$ for $u,v \in H^1_0(\Omega)$. Moreover, an inner product of $H^{-1}(\Omega)$ is naturally defined as
\begin{equation}\label{H-1product}
(f,g)_{H^{-1}(\Omega)} = \langle f, (-\Delta)^{-1}g\rangle_{H^1_0(\Omega)} \quad \mbox{ for } \ f,g \in H^{-1}(\Omega),
\end{equation}
which also gives $\|f\|_{H^{-1}(\Omega)}^2 = (f,f)_{H^{-1}(\Omega)}$ for $f \in H^{-1}(\Omega)$. Then $-\Delta$ is a duality mapping between $H^1_0(\Omega)$ and $H^{-1}(\Omega)$, that is,
\begin{align*}
\|u\|_{H^1_0(\Omega)}^2 &= \|-\Delta u\|_{H^{-1}(\Omega)}^2 = \langle -\Delta u, u \rangle_{H^1_0(\Omega)},\\
\|f\|_{H^{-1}(\Omega)}^2 &= \|(-\Delta)^{-1} f\|_{H^1_0(\Omega)}^2 = \langle f, (-\Delta)^{-1} f \rangle_{H^1_0(\Omega)}
\end{align*}
for $u \in H^1_0(\Omega)$ and $f \in H^{-1}(\Omega)$. Let $X$ and $Y$ be Banach spaces and denote by $\mathscr{L}^{(n)}(X,Y)$ the set of all bounded $n$-linear forms from $X$ into $Y$ for $n \in \N$ (in particular, $\mathscr{L}(X,Y) = \mathscr{L}^{(1)}(X,Y)$). In particular, we write $\mathscr{L}(X) = \mathscr{L}(X,X)$. Let $T : X \to Y$ be an operator. We denote by $\D_G T$ the G\^ateaux derivative of $T$. Moreover, the $n$-th Fr\'echet derivative of $T$ is denoted by $T^{(n)}$ for $n \in \N$ (we shall write $T' = T^{(1)}$ and $T'' = T^{(2)}$ for short). 

\section{Convergence with rates for possibly sign-changing asymptotic profiles}\label{S:csc}

Through the following three sections, we shall give a proof for Theorem \ref{T:sc-conv}. Let $v = v(x,s)$ be a (possibly sign-changing) weak solution to \eqref{eq:1.6}--\eqref{eq:1.8} and let $\phi = \phi(x)$ be a non-degenerate (possibly sign-changing) solution to \eqref{eq:1.10}, \eqref{eq:1.11} such that $v(s_n) \to \phi$ strongly in $H^1_0(\Omega)$ for some $s_n \to +\infty$. Then we first claim that
\begin{equation}\label{ap-conv}
v(s) \to \phi \quad \mbox{ strongly in } H^1_0(\Omega) \ \mbox{ as } \ s \to +\infty.
\end{equation}
Indeed, it is well known that every non-degenerate solution $\phi$ is isolated in $H^1_0(\Omega)$ (see, e.g.,~\cite[\S 5.3]{AK13}), that is, there exists $r > 0$ such that the ball $B_{H^1_0(\Omega)}(\phi;r) = \{w \in H^1_0(\Omega) \colon \|w - \phi\|_{H^1_0(\Omega)} < r\}$ does not involve any solutions to \eqref{eq:1.10}, \eqref{eq:1.11} except for $\phi$. Now, suppose to the contrary that there exist a sequence $\sigma_n \to +\infty$ and a constant $r_0 > 0$ such that $\|v(\sigma_n) - \phi\|_{H^1_0(\Omega)} > r_0$ for any $n \in \N$. Then due to~\cite[Theorem 1]{AK13}, up to a (not relabeled) subsequence, $v(\sigma_n) \to \psi$ strongly in $H^1_0(\Omega)$ for another (nontrivial) solution $\psi$ to \eqref{eq:1.10}, \eqref{eq:1.11}. Then since $\|\phi - \psi\|_{H^1_0(\Omega)} \geq r$, one can take a sequence $\tilde{s}_n \to +\infty$ such that $\|v(\tilde{s}_n) - \phi\|_{H^1_0(\Omega)} = r/2$ (cf.~see~\cite[Proof of Theorem 3]{A16}). However, one can take a (not relabeled) subsequence of $(\tilde{s}_n)$ such that $v(\tilde{s}_n) \to \tilde{\phi}$ strongly in $H^1_0(\Omega)$ for some nontrivial solution $\tilde\phi$ to \eqref{eq:1.10}, \eqref{eq:1.11} and $\|\tilde\phi-\phi\|_{H^1_0(\Omega)} = r/2$. It is a contradiction. Thus \eqref{ap-conv} follows. Moreover, we can assume $v(s) \neq \phi$ for any $s>0$; otherwise, $v(s) \equiv \phi$ for any $s > 0$ large enough.

Formally test \eqref{eq:1.6} by $\partial_s v(s)$ to see that
\begin{equation}\label{en}
\frac 4 {qq'} \left\| \partial_s (\vmh)(s) \right\|_{L^2(\Omega)}^2 \leq - \dfrac{\d}{\d s} J(v(s)),
\end{equation}
where $J: H^1_0(\Omega) \to \R$ is the functional given by \eqref{J} (this procedure can be justified via construction of weak solutions and their uniqueness; see, e.g.,~\cite{G:EnSol} and also~\cite{BII} for the fractional case, cf.~\cite{HB3}). Noting that
\begin{equation}\label{pvm-exp}
\partial_s (\vm)(s) = \frac{2(q-1)}q |v(s)|^{(q-2)/2} \partial_s (\vmh)(s),
\end{equation}
we also find from \eqref{ap-conv} along with the embedding $H^1_0(\Omega) \hookrightarrow L^q(\Omega)$ that, for any $\vep > 0$, there exists $s_\vep > 0$ large enough such that
\begin{align*}
\MoveEqLeft{
\left\| \partial_s (\vm) (s)\right\|_{H^{-1}(\Omega)}
}\\
&\leq C_q \left\| \partial_s (\vm) (s)\right\|_{L^{q'}(\Omega)}\\
&\leq \frac{2(q-1)}q C_q \|v(s)\|_{L^q(\Omega)}^{(q-2)/2} \left\| \partial_s (\vmh)(s) \right\|_{L^2(\Omega)}\\
&\leq \frac{2(q-1)}q C_q \left( \|\phi\|_{L^q(\Omega)}+\vep \right)^{(q-2)/2} \left\| \partial_s (\vmh)(s) \right\|_{L^2(\Omega)}
\end{align*}
for all $s \geq s_\vep$. Here $C_q$ denotes the best constant of the Sobolev-Poincar\'e inequality \eqref{SI}. As above, we shall often use the dual inequality of \eqref{SI},
\begin{equation}\label{d-SP}
 \|f\|_{H^{-1}(\Omega)} \leq C_q \|f\|_{L^{q'}(\Omega)} \quad \mbox{ for } \ f \in L^{q'}(\Omega),
\end{equation}
which is equivalent to \eqref{SI} by duality. Hence $C_q$ is also best for \eqref{d-SP} (see also~\cite[Appendix 7.8]{BSV15} and~\cite{BDNS}). Combining the above with \eqref{en}, we infer that
\begin{align}\label{2}
\frac{1}{q-1} C_q^{-2} \left( \|\phi\|_{L^q(\Omega)}+\vep \right)^{-(q-2)} \left\| \partial_s (\vm) (s)\right\|_{H^{-1}(\Omega)}^2\quad\nonumber\\
\leq - \frac{\d}{\d s} J(v(s)) \quad \mbox{ for } \ s \geq s_\vep.
\end{align}

We shall next derive the following gradient inequality: 
\begin{lemma}[Gradient inequality]\label{L:GI}
For any constant $\omega > Q_{\phi}^{1/2}/\sqrt{2}$, where
$$
Q_\phi := \sup \left\{ \left\langle h, \Lphi^{-1}(h) \right\rangle_{H^1_0(\Omega)} \colon h \in H^{-1}(\Omega), \ \|h\|_{H^{-1}(\Omega)}=1 \right\} > 0,
$$
there exists a constant $\delta > 0$ such that
\begin{align}\label{grad-ineq}
 \left( J(w)-J(\phi) \right)_+^{1/2} \leq \omega \|J'(w)\|_{H^{-1}(\Omega)} \quad \mbox{ for } \ w \in H^1_0(\Omega),
\end{align}
provided that $\|w-\phi\|_{H^1_0(\Omega)} < \delta$. 
\end{lemma}

\begin{proof}
As $J$ is of class $C^2$ in $H^1_0(\Omega)$, by Taylor's theorem (see Theorem \ref{T:Taylor} and Remark \ref{R:Taylor} in Appendix), one finds that
 \begin{equation}\label{T-1}
  J(\phi + h) = J(\phi) + \frac 12 \langle \Lphi h, h \rangle_{H^1_0(\Omega)} + R(h) \quad \mbox{ for } \ h \in H^1_0(\Omega),
 \end{equation}
 where we used the fact that $J'(\phi) = 0$ and $R(\cdot)$ denotes a generic functional defined on $H^1_0(\Omega)$ satisfying
 \begin{equation}\label{a:e}
 \lim_{\|h\|_{H^1_0(\Omega)} \to 0} \dfrac{|R(h)|}{\|h\|_{H^1_0(\Omega)}^2} = 0
 \end{equation}
and may vary from line to line. Moreover, one can take an operator $r : H^1_0(\Omega) \to H^{-1}(\Omega)$ such that
 \begin{equation}\label{T-2}
  J'(\phi+h) = \Lphi h + r(h) \ \mbox{ in } H^{-1}(\Omega)
   \quad \mbox{ for } \ h \in H^1_0(\Omega)
 \end{equation}
 and
 \begin{equation}\label{a:E}
 \lim_{\|h\|_{H^1_0(\Omega)}\to 0} \dfrac{\|r(h)\|_{H^{-1}(\Omega)}}{\|h\|_{H^1_0(\Omega)}} = 0. 
\end{equation}
Hence it follows that
  \begin{align}
   \MoveEqLeft{
   J(w)-J(\phi)
   }\nonumber\\
   &\stackrel{\eqref{T-1}}= \frac12 \left\langle \Lphi(w-\phi), w-\phi \right\rangle_{H^1_0(\Omega)} + R(w-\phi)\nonumber\\
   &\stackrel{\eqref{T-2}}=
   \frac12 \left\langle J'(w), \Lphi^{-1} \left( J'(w)\right) \right\rangle_{H^1_0(\Omega)} + R(w-\phi)\nonumber\\
   &\leq \frac{Q_\phi}2 \|J'(w)\|_{H^{-1}(\Omega)}^2 + R(w-\phi)
   \quad \mbox{ for } \ w \in H^1_0(\Omega),\label{GI+E}
\end{align}
where $Q_\phi$ is a positive constant given by
$$
Q_\phi := \sup \left\{ \left\langle h, \Lphi^{-1}(h) \right\rangle_{H^1_0(\Omega)} \colon h \in H^{-1}(\Omega), \ \|h\|_{H^{-1}(\Omega)} = 1 \right\} > 0.
$$
Indeed, $\Lphi$ has positive eigenvalues. Moreover, by \eqref{a:e} and \eqref{a:E}, for any $\nu > 0$ one can take $\delta_\nu > 0$ such that
  \begin{equation}\label{eE}
  |R(h)| \leq \frac\nu2\|h\|_{H^1_0(\Omega)}^2 \quad \mbox{ and } \quad
  \|r(h)\|_{H^{-1}(\Omega)} \leq \nu \|h\|_{H^1_0(\Omega)}
  \end{equation}
  for any $h \in H^1_0(\Omega)$ satisfying $\|h\|_{H^1_0(\Omega)} < \delta_\nu$.
Now, we see that
  \begin{align*}
   \MoveEqLeft{
   \|w-\phi\|_{H^1_0(\Omega)}
   }\\
  &= \left\| \Lphi^{-1} \circ \Lphi (w-\phi) \right\|_{H^1_0(\Omega)}\nonumber\\
  &\leq \|\Lphi^{-1}\|_{\mathscr{L}(H^{-1}(\Omega),H^1_0(\Omega))} \left\| \Lphi (w-\phi)\right\|_{H^{-1}(\Omega)}\\
  &\stackrel{\eqref{T-2}}\leq 
\|\Lphi^{-1}\|_{\mathscr{L}(H^{-1}(\Omega),H^1_0(\Omega))} \left(
   \left\| J'(w) \right\|_{H^{-1}(\Omega)} + \left\| r(w-\phi) \right\|_{H^{-1}(\Omega)}
  \right),
\end{align*}
whence it follows from \eqref{eE} that, for $0 < \nu < \|\Lphi^{-1}\|_{\mathscr{L}(H^{-1}(\Omega),H^1_0(\Omega))}^{-1}$,
\begin{align}
\|w-\phi\|_{H^1_0(\Omega)}
 \leq \frac{
\|\Lphi^{-1}\|_{\mathscr{L}(H^{-1}(\Omega),H^1_0(\Omega))}
}{
1-\nu \|\Lphi^{-1}\|_{\mathscr{L}(H^{-1}(\Omega),H^1_0(\Omega))}
} \left\| J'(w) \right\|_{H^{-1}(\Omega)}\label{wphi<J'}
\end{align}
for any $w \in H^1_0(\Omega)$ satisfying $\|w-\phi\|_{H^1_0(\Omega)} < \delta_\nu$. Hence combining \eqref{GI+E}, \eqref{eE} and \eqref{wphi<J'}, we conclude that \eqref{grad-ineq} is satisfied for any $\omega > Q_\phi^{1/2}/\sqrt{2}$ and some $\delta > 0$ small enough. This completes the proof.
 \end{proof}

Since $\partial_s (\vm)(s) = -J'(v(s))$ (see \eqref{eq:1.6}--\eqref{eq:1.8}) and $J(v(s)) > J(\phi)$ for $s > 0$, we obtain
\begin{align*}
\frac{1}{q-1} C_q^{-2} \left( \|\phi\|_{L^q(\Omega)}+\vep \right)^{-(q-2)} \omega^{-2} \left[ J(v(s)) - J(\phi) \right]\quad\\
\leq - \frac{\d}{\d s} \left[ J(v(s)) - J(\phi) \right]
\end{align*}
for $s \geq s_\vep$ with some $s_\vep > 0$ large enough so that $\sup_{s \geq s_\vep}\|v(s)-\phi\|_{H^1_0(\Omega)} < \delta$ (see \eqref{ap-conv}). Thus since $J(v(s_0)) \leq J(v_0)$,  we get
\begin{align}
 0 < J(v(s)) - J(\phi) &\leq \big[ J(v(s_0)) - J(\phi) \big] \e^{- \lambda (s-s_0)}\nonumber\\
&\leq \big[ J(v_0) - J(\phi) \big] \e^{\lambda s_0} \e^{- \lambda s} \quad \mbox{ for } \ s \geq s_0,\label{exp-conv01}
\end{align}
where $\lambda>0$ is any constant satisfying
\begin{equation}\label{PO}
\lambda < \frac{2}{q-1} C_q^{-2} \|\phi\|_{L^q(\Omega)}^{-(q-2)} Q_{\phi}^{-1}
\end{equation}
and $s_0 > 0$ is a constant depending on the choice of $\lambda$. Since $J(v(s)) \leq J(v_0)$ for $s \geq 0$, setting $C_\lambda = [J(v_0) - J(\phi)] \e^{\lambda s_0}$, we obtain
$$
0 < J(v(s)) - J(\phi) \leq C_\lambda \e^{-\lambda s} \quad \mbox{ for } \ s \geq 0.
$$

\section{Quantitative estimates for the rate of convergence}\label{S:que}

In this section, we shall establish a quantitative estimate for the rate of convergence obtained in the last section. To this end, as in~\cite{BF21}, let us introduce the following weighted eigenvalue problem:
\begin{equation}\label{ep}
 -\Delta e = \mu |\phi|^{q-2} e \ \mbox{ in } \Omega, \quad e = 0 \ \mbox{ on } \partial \Omega, 
\end{equation}
whose eigenpairs $\{(\mu_j,e_j)\}_{j=1}^\infty$ are such that
\begin{itemize}
 \item $0 < \mu_1 < \mu_2 \leq \mu_3 \leq \cdots \leq \mu_k \to +\infty$ as $k \to +\infty$,
 \item the eigenfunctions $\{e_j\}_{j=1}^\infty$ forms a CONS in $H^1_0(\Omega)$; in particular, $(e_j,e_k)_{H^1_0(\Omega)} = \delta_{jk}$ for $j,k \in \N$
\end{itemize}
(see, e.g.,~\cite{B-FA}). Here we note that $|\phi| \neq 0$ a.e.~in $\Omega$ (see~\cite{GaLi87} and~\cite{HaSi89}). Moreover, $\{-\Delta e_j\}_{j=1}^\infty$ forms a CONS in $H^{-1}(\Omega)$. In particular, if $\phi$ is a positive solution to \eqref{eq:1.10}, \eqref{eq:1.11}, then $\mu_1 = \lambda_q$ and $e_1 = \phi/\|\phi\|_{H^1_0(\Omega)}$.

For every $u \in H^1_0(\Omega)$, there exists a sequence $\{\alpha_j\}_{j=1}^\infty$ in $\ell^2$ such that
$$
u = \sum_{j=1}^\infty \alpha_j e_j \ \mbox{ in } H^1_0(\Omega).
$$
Hence
\begin{align*}
\Lphi(u) &= \sum_{j=1}^\infty \alpha_j \Lphi(e_j)\\
& = \sum_{j=1}^\infty \alpha_j \left[ - \Delta e_j - \lambda_q (q-1) |\phi|^{q-2} e_j \right]\\
&= \sum_{j=1}^\infty \alpha_j \frac{\mu_j - \lambda_q (q-1)}{\mu_j} (-\Delta e_j)  \quad \mbox{ in } H^{-1}(\Omega).
\end{align*}
In what follows, we shall write $\nu_j := \mu_j - \lambda_q (q-1)$ for $j \in \N$. We particularly find that
$$
\Lphi(e_j) = \nu_j |\phi|^{q-2} e_j, \quad j \in \N.
$$
For any $f \in H^{-1}(\Omega)$, since $\Linv f$ lies on $H^1_0(\Omega)$, there exists a sequence $\{\beta_j\}_{j=1}^\infty$ in $\ell^2$ such that
$$
\Linv f = \sum_{j=1}^\infty \beta_j e_j \ \mbox{ in } H^1_0(\Omega),\ \mbox{ i.e., } f = \sum_{j=1}^\infty \beta_j (-\Delta e_j)  \ \mbox{ in } H^{-1}(\Omega),
$$
and hence,
\begin{equation}\label{Linvf}
\Lphi^{-1}(f) = \sum_{j=1}^\infty \beta_j \frac{\mu_j}{\nu_j} e_j \ \mbox{ in } H^1_0(\Omega).
\end{equation}
Therefore it follows that
\begin{align*}
\left\langle f, \Lphi^{-1}(f) \right\rangle_{H^1_0(\Omega)} = \sum_{j=1}^\infty \beta_j^2 \frac{\mu_j}{\nu_j}.
\end{align*}
Noting that
$$
\|f\|_{H^{-1}(\Omega)}^2 = \sum_{j=1}^\infty \beta_j^2,
$$
we observe that
\begin{align*}
Q_\phi &= \sup \left\{ \left\langle f, \Lphi^{-1}(f) \right\rangle_{H^1_0(\Omega)} \colon f \in H^{-1}(\Omega), \ \|f\|_{H^{-1}(\Omega)} = 1 \right\}\\
&= \max_{j} \frac{\mu_j}{\nu_j} = \frac{\mu_k}{\nu_k} > 0,
\end{align*}
where $k \in \N$ is the number determining \eqref{lambda} (i.e., $\mu_k$ is the least eigenvalue for \eqref{wep} greater than $\lambda_q(q-1)$).

Thus combining the observation above with \eqref{PO}, we conclude that
$$
0 < \lambda < \frac{2}{q-1} C_q^{-2} \|\phi\|_{L^q(\Omega)}^{-(q-2)} \frac{\mu_k - \lambda_q(q-1)}{\mu_k}.
$$
Consequently, we obtain
\begin{lemma}[Exponential convergence of energy]\label{P:sc-conv}
Let $v = v(x,s)$ be a {\rm(}possibly sign-changing{\rm)} weak solution to \eqref{eq:1.6}--\eqref{eq:1.8} and let $\phi = \phi(x)$ be a {\rm(}possibly sign-changing{\rm)} nontrivial solution to \eqref{eq:1.10}, \eqref{eq:1.11} such that $v(s_n) \to \phi$ strongly in $H^1_0(\Omega)$ for some $s_n \to +\infty$. Suppose that $\phi$ is non-degenerate. Then for any constant $\lambda > 0$ satisfying
\begin{equation}\label{exp-est0}
0 < \lambda < \frac 2{q-1} C_q^{-2} \|\phi\|_{L^q(\Omega)}^{-(q-2)} \frac{\mu_k - \lambda_q(q-1)}{\mu_k},
\end{equation}
where $\mu_k$ is the least eigenvalue for \eqref{wep} greater than $\lambda_q(q-1)$ and $C_q$ is the best constant of the Sobolev-Poincar\'e inequality \eqref{SI}, there exists a constant $C_\lambda > 0$ depending on the choice of $\lambda$ such that
$$
0 \leq J(v(s)) - J(\phi) \leq C_\lambda \e^{-\lambda s} \quad \mbox{ for } \ s \geq 0.
$$
\end{lemma}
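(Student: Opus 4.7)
The plan is to set up a first-order Gronwall-type differential inequality for the excess energy $E(s) := J(v(s)) - J(\phi) \geq 0$, of the form $E'(s) \leq -\lambda E(s)$ for all large $s$, where $\lambda$ is any constant below the right-hand side of \eqref{exp-est0}. The three ingredients are: (a) the standard dissipation identity obtained by testing \eqref{eq:1.6} against $\partial_s v(s)$; (b) a duality estimate converting the natural dissipation norm into $\|J'(v(s))\|_{H^{-1}(\Omega)}$; and (c) the \L{}ojasiewicz--Simon type gradient inequality of Lemma~\ref{L:GI}. A final spectral computation identifies the sharp constant in the exponent.

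First, testing \eqref{eq:1.6} by $\partial_s v$ yields $\tfrac{4}{qq'}\|\partial_s(|v|^{(q-2)/2}v)(s)\|_{L^2(\Omega)}^2 \leq -E'(s)$. Using the chain-rule identity $\partial_s(|v|^{q-2}v) = \tfrac{2(q-1)}{q}|v|^{(q-2)/2}\partial_s(|v|^{(q-2)/2}v)$, H\"older's inequality, and the dual Sobolev--Poincar\'e embedding $L^{q'}(\Omega) \hookrightarrow H^{-1}(\Omega)$ (whose best constant equals $C_q$ by duality of \eqref{SI}), one bounds $\|\partial_s(|v|^{q-2}v)(s)\|_{H^{-1}(\Omega)}$ by $\|v(s)\|_{L^q(\Omega)}^{(q-2)/2}\|\partial_s(|v|^{(q-2)/2}v)(s)\|_{L^2(\Omega)}$ up to an explicit prefactor. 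Since $v(s)\to\phi$ in $H^1_0(\Omega)$, for every $\varepsilon>0$ one has $\|v(s)\|_{L^q(\Omega)} \leq \|\phi\|_{L^q(\Omega)} + \varepsilon$ eventually, so combining these estimates gives
$$
\tfrac{1}{q-1}\,C_q^{-2}\,(\|\phi\|_{L^q(\Omega)}+\varepsilon)^{-(q-2)}\,\|\partial_s(|v|^{q-2}v)(s)\|_{H^{-1}(\Omega)}^2 \leq -E'(s).
$$

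Next, recognizing that \eqref{eq:1.6} rewrites as $\partial_s(|v|^{q-2}v) = -J'(v(s))$ in $H^{-1}(\Omega)$, Lemma~\ref{L:GI} supplies $E(s) \leq \omega^2\|J'(v(s))\|_{H^{-1}(\Omega)}^2$ for every $\omega > \|\Lphi^{-1}\|^{1/2}/\sqrt{2}$, valid once $v(s)$ lies in the $H^1_0$-neighborhood provided by the lemma. Chaining this with the preceding dissipation inequality produces $-E'(s) \geq \lambda E(s)$ for $s$ large, and Gronwall integration yields $E(s) \leq Ce^{-\lambda s}$. To match \eqref{exp-est0}, I would expand any $u\in H^1_0(\Omega)$ in the CONS $\{e_j\}$ of the weighted eigenproblem~\eqref{ep} and use $\Lphi(e_j) = \nu_j|\phi|^{q-2}e_j = (\nu_j/\mu_j)(-\Delta e_j)$, with $\nu_j := \mu_j - \lambda_q(q-1)$, to see that $\Lphi$ is diagonal in the paired bases $\{e_j\}$ of $H^1_0(\Omega)$ and $\{-\Delta e_j\}$ of $H^{-1}(\Omega)$. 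Reading off $\|\Lphi^{-1}\|_{\mathscr{L}(H^{-1}(\Omega),H^1_0(\Omega))} = \max_j|\mu_j/\nu_j|$, which is finite by non-degeneracy, and letting $\varepsilon\downarrow 0$ while $\omega$ approaches its allowed infimum, then recovers the admissible range \eqref{exp-est0} for $\lambda$.

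The main obstacle is the applicability of Lemma~\ref{L:GI}, which presupposes that $v(s)$ lies in a fixed $H^1_0$-ball around $\phi$ for all large $s$. For sign-changing solutions only quasi-convergence along subsequences is a priori available; however, the non-degeneracy hypothesis on $\phi$ isolates it in $H^1_0(\Omega)$ from the remaining stationary solutions to \eqref{eq:1.10}--\eqref{eq:1.11}, so every $\omega$-limit of the trajectory must coincide with $\phi$, forcing convergence of the full orbit. A secondary technical point is the identification of the best constant in the dual embedding $L^{q'}\hookrightarrow H^{-1}$ with $C_q$, which is immediate from duality of \eqref{SI}. Once these two facts are in hand, the remaining manipulations are routine.
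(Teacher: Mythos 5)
Your proposal reproduces the paper's own argument: dissipation identity from testing \eqref{eq:1.6} by $\partial_s v$, conversion to $\|J'(v(s))\|_{H^{-1}(\Omega)}$ via the chain rule, H\"older, and the dual Sobolev--Poincar\'e embedding with constant $C_q$, then the gradient inequality of Lemma~\ref{L:GI} and Gronwall, followed by the spectral diagonalization of $\Lphi$ in the bases $\{e_j\}$ / $\{-\Delta e_j\}$ to identify $\|\Lphi^{-1}\|_{\mathscr{L}(H^{-1}(\Omega),H^1_0(\Omega))} = \max_j|\mu_j/\nu_j|$. The handling of full convergence from quasi-convergence via non-degeneracy and isolation is also the same, so the proposal is correct and follows essentially the same route as the paper.
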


\begin{remark}[Least-energy asymptotic profiles]\label{R:leap}
In particular, if $\phi>0$ is a \emph{least-energy} solution to \eqref{eq:1.10}, \eqref{eq:1.11}, it then holds that
$$
C_q = \frac{\|\phi\|_{L^q(\Omega)}}{\|\nabla \phi\|_{L^2(\Omega)}} = \lambda_q^{-1/2} \|\phi\|_{L^q(\Omega)}^{(2-q)/2},
$$
(see~\cite{Rabinowitz,Willem} and also~\cite{BGV12,BGV13} for $q$ close to $2$) and hence, we can choose any $\lambda$ satisfying
\begin{equation*}
0 < \lambda < \frac{2\lambda_q}{q-1} \frac{\mu_k - \lambda_q(q-1)}{\mu_k} = \lamo \frac{\lambda_q}{\mu_k} = \lamo \frac{\mu_1}{\mu_k}.
\end{equation*}
Here we used the fact that $\mu_1 = \lambda_q$ because of $\phi > 0$. Moreover, noting that $\mu_1 < \mu_k$, we note that in Theorem \ref{T:sc-conv} there still remains a gap from the sharp rate $\lamo$ even for least-energy asymptotic profiles (cf.~Corollary \ref{C:H10}).
\end{remark}

\section{Exponential convergence of rescaled solutions}\label{S:exp-conv}

In this section, we shall derive exponential convergence of rescaled solutions $v = v(x,s)$ in $H^1_0(\Omega)$ as $s \to +\infty$. From \eqref{2} along with \eqref{grad-ineq}, we observe that
\begin{align*}
\MoveEqLeft{
\omega^{-1} \left[ J(v(s)) - J(\phi) \right]^{1/2} \|\partial_s (\vm)(s)\|_{H^{-1}(\Omega)} 
}\\
&\leq \|\partial_s (\vm)(s)\|_{H^{-1}(\Omega)}^2
\leq - C \frac{\d}{\d s} \left[ J(v(s))-J(\phi)\right],
\end{align*}
whence it follows that
$$
\|\partial_s (\vm)(s)\|_{H^{-1}(\Omega)} \leq - C \frac{\d}{\d s} \left[ J(v(s))-J(\phi)\right]^{1/2}.
$$
Thus one can derive that 
\begin{align*}
 \MoveEqLeft{
 \left\| |\phi|^{q-2}\phi - (\vm)(s) \right\|_{H^{-1}(\Omega)}
 }\\
 &\leq
 \int^\infty_s \left\| \partial_s \left( \vm \right) (\sigma) \right\|_{H^{-1}(\Omega)} \, \d \sigma\\
 &\leq C
 \left[ J(v(s)) - J(\phi) \right]^{1/2} 
 \leq M \e^{- \frac{\lambda}2 s} \quad \mbox{ for } \ s \geq 0
\end{align*}
for some constant $M > 0$. Here we have used Lemma \ref{P:sc-conv} with some $\lambda > 0$ satisfying \eqref{exp-est0}. Then one has
\begin{align}
\MoveEqLeft{
\frac{4}{qq'} \left\| (|v|^{(q-2)/2}v)(s) - |\phi|^{(q-2)/2}\phi \right\|_{L^2(\Omega)}^2 
}\nonumber\\
 &\leq \left\langle (\vm)(s)-|\phi|^{q-2}\phi, v(s)-\phi \right\rangle_{H^1_0(\Omega)}\nonumber\\
 &\leq \left\| (\vm)(s)-|\phi|^{q-2}\phi \right\|_{H^{-1}(\Omega)} \|v(s)-\phi\|_{H^1_0(\Omega)}\nonumber\\
 &\leq CM \e^{- \frac{\lambda}2 s} \quad \mbox{ for } \ s \geq 0\label{chk1}.
\end{align}
Here we used the inequality 
\begin{equation}\label{fundame}
\frac 4{qq'} \left||a|^{(q-2)/2}a - |b|^{(q-2)/2}b\right|^2 \leq \left(|a|^{q-2}a - |b|^{q-2}b\right)(a-b)
\end{equation}
for $a,b \in \R$ (see Appendix \ref{S:fe}) as well as the fact that $\sup_{s \geq 0}\|v(s)\|_{H^1_0(\Omega)} < +\infty$. 
Moreover, using Taylor's theorem (see Theorem \ref{T:Taylor} and Remark \ref{R:Taylor} in Appendix), we observe that
\begin{align}
\MoveEqLeft{
J(v(s))- J(\phi)
}\nonumber\\
&= \frac 12 \|\nabla (v(s)-\phi)\|_{L^2(\Omega)}^2 + \Big(  \nabla \phi , \nabla (v(s)-\phi) \Big)_{L^2(\Omega)}\nonumber\\
&\quad - \frac{\lambda_q}q \|v(s)\|_{L^q(\Omega)}^q + \frac{\lambda_q}q \|\phi\|_{L^q(\Omega)}^q\nonumber\\
&= \frac 12 \|\nabla (v(s)-\phi)\|_{L^2(\Omega)}^2 + \lambda_q \int_\Omega |\phi|^{q-2}\phi(v(s)-\phi) \, \d x\nonumber\\
&\quad - \frac{\lambda_q}q \|v(s)\|_{L^q(\Omega)}^q + \frac{\lambda_q}q \|\phi\|_{L^q(\Omega)}^q\nonumber\\
&=  \frac 12 \|\nabla (v(s)-\phi)\|_{L^2(\Omega)}^2 - \dfrac{\lambda_q}2 (q-1) \int_\Omega |v(s)-\phi|^2 |\phi|^{q-2} \, \d x\nonumber\\
&\quad + o \left( \|v(s)-\phi\|_{H^1_0(\Omega)}^2 \right).\label{J->H10}
\end{align}
One can verify that
\begin{align}
\MoveEqLeft{
|v(x,t)-\phi(x)|^2
}\nonumber\\
&\leq C |\phi(x)|^{2-q} \left| |v(x,t)|^{(q-2)/2}v(x,t) - |\phi(x)|^{(q-2)/2}\phi(x) \right|^2,\label{BF1}
\end{align}
whenever $\phi(x) \neq 0$. Here we used the inequality, 
\begin{equation}\label{sublin-ineq2}
0 \leq \frac{|a|^{p-1}a - |b|^{p-1}b}{a-b} \leq 2^{1-p} |a|^{p-1}  \quad \mbox{ for } \ a,b \in \R, \ a \neq 0
\end{equation}
for $p \in (0,1)$ (see Appendix \ref{S:fe}), with the choice $p = 2/q \in (0,1)$, $a = |\phi|^{(q-2)/2}\phi$ and $b = |v|^{(q-2)/2}v$. Therefore it follows from \eqref{J->H10} and \eqref{BF1} that
\begin{align*}
\MoveEqLeft{
J(v(s))-J(\phi)
}\\
&\geq \frac12 \|\nabla v(s)-\nabla \phi\|_{L^2(\Omega)}^2 - C \left\| (|v|^{(q-2)/2}v)(s) - |\phi|^{(q-2)/2}\phi \right\|_{L^2(\Omega)}^2 \\
&\quad + o \left( \|v(s)-\phi\|_{H^1_0(\Omega)}^2 \right). 
\end{align*}
Combining all these facts (see Lemma \ref{P:sc-conv} and \eqref{chk1}), we deduce that
$$
\|\nabla v(s)-\nabla \phi\|_{L^2(\Omega)}^2 \leq C \left( \e^{-\lambda s} + \e^{-\frac\lambda2s}\right) \lesssim \e^{-\frac\lambda2s}
$$
for $s \gg 1$. Now, turning back to \eqref{chk1} with the above, we can derive that
$$
\left\| (|v|^{(q-2)/2}v)(s) - |\phi|^{(q-2)/2}\phi \right\|_{L^2(\Omega)}^2
\lesssim \e^{- \frac \lambda 2 s} \e^{-\frac \lambda 4 s}
= \e^{-\frac \lambda 2 (1 + \frac 12)s},
$$
which also leads us to obtain
$$
\|\nabla v(s)-\nabla \phi\|_{L^2(\Omega)}^2 \lesssim \e^{-\frac \lambda 2 (1 + \frac 12)s}.
$$
Iterating these procedures, we can conclude that, for any $\mu < \lambda$, there exists a constant $C_\mu$ depending on the choice of $\mu$ such that
\begin{equation}\label{v-exp-c}
\|\nabla v(s)-\nabla \phi\|_{L^2(\Omega)}^2 \leq C_\mu \e^{-\mu s} \quad \mbox{ for } \ s \geq 0.
\end{equation}
Thus we obtain

\begin{lemma}[Exponential convergence of rescaled solutions]\label{P:exp-conv}
Under the same assumptions as in Lemma \ref{P:sc-conv}, if $J(v(s))-J(\phi)$ converges to zero at an exponential rate $\e^{-\lambda s}$ as $s \to +\infty$, then, for any $0 < \mu < \lambda$, it holds that $v(s) \to \phi$ strongly in $H^1_0(\Omega)$ at the rate $\e^{-\mu s/2}$ as $s \to +\infty$.
\end{lemma}

\begin{proof}[Proof of Theorem \ref{T:sc-conv}]
Theorem \ref{T:sc-conv} can be proved by combining Lemmata \ref{P:sc-conv} and \ref{P:exp-conv}. To be more precise, first fix $\lambda$ satisfying \eqref{exp-est}, and then, take another $\lambda'$ which is greater than $\lambda$ but still satisfies \eqref{exp-est}. Then apply Lemma \ref{P:sc-conv} for the choice $\lambda'$ to get the decay of $J(v(s)) - J(\phi)$ at the rate $\e^{-\lambda' s}$. Finally, apply Lemma \ref{P:exp-conv} by substituting $\lambda$ and $\lambda'$ to $\mu$ and $\lambda$ of the lemma, respectively, to get the conclusion.
\end{proof}

\section{Almost sharp rate of convergence for positive asymptotic profiles}\label{S:almost_sharp}

In Theorem \ref{T:sc-conv}, the rate of convergence \eqref{c:J-exp:sc} is estimated by \eqref{exp-est}; however, it is still suboptimal (even for least-energy solutions, see Remark \ref{R:leap}). In Sections \ref{S:almost_sharp}--\ref{S:2<q<3}, we shall more precisely estimate the rate of convergence for \emph{non-negative} rescaled solutions to non-degenerate \emph{positive} asymptotic profiles. We assume that $u_0 \geq 0$ a.e.~in $\Omega$, and hence, $v = v(x,s)$ is always non-negative in $\Omega \times (0,+\infty)$. In what follows, we let $k \in \N$ be such that $\nu_k > 0$ and $\nu_\ell < 0$ for $\ell = 1,2,\ldots,k-1$. Moreover, we denote by $L^2(\Omega;\phi^{q-2}\d x)$ and $L^2(\Omega;\phi^{2-q}\d x)$ the spaces of square-integrable functions with weights $\phi(x)^{q-2}$ and $\phi(x)^{2-q}$, respectively. 

Moreover, we shall use the following fact:
\begin{equation}\label{del-decay}
\delta(s) := \left\|\frac{v(s)}{\phi}-1\right\|_{L^\infty(\Omega)} \to 0 \quad \mbox{ as } \ s \to +\infty.
\end{equation}
It was first proved by~\cite[Theorem 2.1]{BGV12} based on the global Harnack principle developed by~\cite[Proposition 6.2]{DKV91}, where $\Omega$ is supposed to be of class $C^2$, and then, it was extended to a quantitative convergence by~\cite{BF21} with a proof independent of~\cite{BGV12} and using only the $C^{1,1}$ regularity of $\Omega$ (see \eqref{quan-re2} and \eqref{quan-re3} in Lemma \ref{L:re-exp} below). Therefore (for bounded $C^{1,1}$ domains, using Theorem \ref{T:sc-conv} and Lemma \ref{L:re-exp} below) we can take $s_1 > 0$ large enough so that
\begin{equation}\label{ghp}
0 < \frac 12 \phi \leq v(s) \leq \frac 32 \phi \ \mbox{ a.e.~in } \Omega \quad \mbox{ for } \ s \geq s_1
\end{equation}
(cf.~see~\cite[Proposition 6.2]{DKV91}). Hence since $v(s)/\phi$ is bounded a.e.~in $\Omega$ for $s > s_1$, noting that $\partial_s (v^{q/2})(s) \in L^2(\Omega)$ by \eqref{en}, we find from \eqref{pvm-exp} that
\begin{align*}
\MoveEqLeft{
\int_\Omega \left|\partial_s (v^{q-1})(s)\right|^2 \phi^{2-q} \, \d x
}\nonumber\\
&= \frac{4(q-1)^2}{q^2} \int_\Omega \left|\partial_s (v^{q/2})(s)\right|^2 \left(\frac{v(s)}{\phi}\right)^{q-2} \, \d x < +\infty,
\end{align*}
which along with \eqref{eq:1.6} implies $J'(v(s)) \in L^2(\Omega;\phi^{2-q}\d x)$, for $s > s_1$. Therefore, due to \eqref{en} and \eqref{del-decay}, for any $\vep > 0$, one can take $s_\vep > s_1$ large enough that
\begin{align}
\|J'(v(s))\|_{L^2(\Omega;\phi^{2-q}\d x)}^2
&\leq \frac{4(q-1)^2}{q^2} (1+\vep)^{q-2} \int_\Omega \left|\partial_s (v^{q/2})(s)\right|^2 \, \d x\nonumber\\
&\leq - (q-1)(1+\vep)^{q-2} \dfrac{\d}{\d s} J(v(s))\label{J-EI}
\end{align}
for $s \geq s_\vep$.

With the aid of Taylor's theorem in Banach spaces, we can obtain the following:

\begin{lemma}\label{L:Taylor}
For each $s > s_1$, it holds that 
\begin{align}
J(v(s)) - J(\phi) &= \frac 1 2 \left\langle \Lphi(v(s)-\phi), v(s)-\phi \right\rangle_{H^1_0(\Omega)} + E(s),\label{J-exp}\\
J'(v(s)) &= \Lphi(v(s)-\phi) + e(s).\label{J'-exp}
\end{align}
Here and henceforth, $E(s) \in \R$ and $e(s) \in H^{-1}(\Omega)$ denote generic functions satisfying
\begin{equation}\label{E-weak}
\lim_{s \to \infty} \frac{|E(s)|}{\|v(s)-\phi\|_{H^1_0(\Omega)}^{2+\gamma}} < +\infty,\quad
 \lim_{s \to \infty} \frac{\|e(s)\|_{H^{-1}(\Omega)}}{\|v(s)-\phi\|_{H^1_0(\Omega)}^{1+\gamma}} < +\infty
\end{equation}
for some $\gamma \in (0,1]$ and may vary from line to line.
\end{lemma}

\begin{proof}
In case $q \geq 3$, $J$ is of class $C^3$ in $H^1_0(\Omega)$ in the sense of Fr\'echet derivative (this fact may be standard, but it will be checked in Appendix \ref{S:Taylor}). Hence employing Taylor's theorem (see Theorem \ref{T:Taylor} in Appendix \ref{S:Taylor}) and recalling that $J'(\phi) = 0$, we can immediately verify \eqref{J-exp} and \eqref{J'-exp} with $E(s) \in \R$ and $e(s) \in H^{-1}(\Omega)$ satisfying \eqref{E-weak} with $\gamma = 1$. In case $2 < q < 3$, $J''$ may fail to be Fr\'echet differentiable at $\phi$ in $H^1_0(\Omega)$; however, we can still prove the assertions for some $\gamma \in (0,1)$. A proof for this case will be detailed in Section \ref{S:2<q<3}.
\end{proof}

Let $s > 0$ be fixed for a while. Since $\Lphi$ is invertible, one can deduce from \eqref{J-exp} and \eqref{J'-exp} along with \eqref{E-weak} that
\begin{align}\label{J-expand}
J(v(s)) - J(\phi) = \frac 1 2 \left\langle J'(v(s)), \Lphi^{-1}(J'(v(s))) \right\rangle_{H^1_0(\Omega)} + E(s).
\end{align}
Since $J'(v(s))$ belongs to $H^{-1}(\Omega)$, there exists a sequence $\{\sigma_j(s)\}_{j=1}^\infty$ in $\ell^2$ such that
$$
J'(v(s)) = \sum_{j=1}^\infty \sigma_j(s) (-\Delta e_j) \ \mbox{ in } H^{-1}(\Omega)
$$
(namely, we set $\sigma_j(s) = (J'(v(s)),-\Delta e_j)_{H^{-1}(\Omega)}$ for $j \in \N$). Hence, by virtue of \eqref{Linvf},
\begin{align*}
\Lphi^{-1}(J'(v(s))) = \sum_{j=1}^\infty \sigma_j(s) \frac{\mu_j}{\nu_j} e_j  \ \mbox{ in } H^1_0(\Omega).
\end{align*}
Thus
\begin{align*}
\MoveEqLeft{
\frac 1 2 \left\langle J'(v(s)), \Lphi^{-1}(J'(v(s))) \right\rangle_{H^1_0(\Omega)}
}\\
&= \frac 1 2 \sum_{i=1}^\infty \sum_{j=1}^\infty \sigma_i(s) \sigma_j(s)\frac{\mu_j}{\nu_j} \langle -\Delta e_i , e_j \rangle_{H^1_0(\Omega)}
= \frac 1 2 \sum_{j=1}^\infty \sigma_j(s)^2 \frac{\mu_j}{\nu_j}.
\end{align*}
Consequently,
\begin{align*}
J(v(s)) - J(\phi) - \frac 1 2 \sum_{j=1}^{k-1} \sigma_j(s)^2 \frac{\mu_j}{\nu_j}
&= \frac 1 2 \sum_{j=k}^\infty \sigma_j(s)^2 \frac{\mu_j}{\nu_j} + E(s)\\
&\leq \frac 1 {2\nu_k} \sum_{j=k}^\infty \mu_j \sigma_j(s)^2 + E(s).
\end{align*}
On the other hand, we can check in a standard manner that $\{-\Delta e_j/\sqrt{\mu_j}\}_{j=1}^\infty$ forms a CONS in $L^2(\Omega;\phi^{2-q}\d x)$ equipped with the inner product 
$$
(f,g)_{L^2(\Omega;\phi^{2-q}\d x)} = \int_\Omega f g \phi^{2-q} \, \d x \quad \mbox{ for } \ f,g \in L^2(\Omega;\phi^{2-q}\d x).
$$ 
Moreover, since $J'(v(s))$ belongs to $L^2(\Omega;\phi^{2-q}\d x)$, recalling \eqref{wep} and noting that $\langle f,u \rangle_{H^1_0(\Omega)} = (f, -\Delta u)_{H^{-1}(\Omega)}$ for $u \in H^1_0(\Omega)$ and $f \in H^{-1}(\Omega)$ (see \eqref{H-1product}), we observe that
\begin{align*}
\MoveEqLeft{
 \left( J'(v(s)), \frac{-\Delta e_j}{\sqrt{\mu_j}} \right)_{L^2(\Omega; \phi^{2-q} \d x)}
}\\
&= \int_\Omega J'(v(s)) \frac{-\Delta e_j}{\sqrt{\mu_j}} \phi^{2-q} \, \d x \\
&\stackrel{\eqref{wep}}= \sqrt{\mu_j} \int_\Omega J'(v(s)) e_j \, \d x\\
&= \sqrt{\mu_j} \langle J'(v(s)), e_j \rangle_{H^1_0(\Omega)}\\
&\stackrel{\eqref{H-1product}}= \sqrt{\mu_j} \left( J'(v(s)), -\Delta e_j \right)_{H^{-1}(\Omega)}
= \sqrt{\mu_j} \sigma_j(s)
\end{align*}
for $j \in \N$. Therefore we have
$$
J'(v(s)) = \sum_{j=1}^\infty \sqrt{\mu_j} \sigma_j(s) \frac{-\Delta e_j}{\sqrt{\mu_j}} \ \mbox{ in } L^2(\Omega;\phi^{2-q}\d x),
$$
which implies
\begin{align*}
 \sum_{j=k}^\infty \mu_j \sigma_j(s)^2
 \leq \sum_{j=1}^\infty \mu_j \sigma_j(s)^2
 = \|J'(v(s))\|_{L^2(\Omega;\phi^{2-q}\d x)}^2.
\end{align*}
Thus we obtain
$$
J(v(s)) - J(\phi) - \frac 1 2 \sum_{j=1}^{k-1} \sigma_j(s)^2 \frac{\mu_j}{\nu_j}
\leq \frac 1 {2\nu_k} \|J'(v(s))\|_{L^2(\Omega;\phi^{2-q}\d x)}^2 + E(s).
$$
Moreover, since $J'(v(s)) = \Lphi (v(s)-\phi) + e(s)$ and $\Lphi$ is invertible, we observe that
\begin{align*}
\MoveEqLeft{
\|v(s)-\phi\|_{H^1_0(\Omega)} 
}\\
&= \|\Lphi^{-1} (J'(v(s)) - e(s)) \|_{H^1_0(\Omega)} \\
&\leq \|\Lphi^{-1}\|_{\mathcal{L}(H^{-1}(\Omega);H^1_0(\Omega))} \left( \|J'(v(s))\|_{H^{-1}(\Omega)} + \|e(s)\|_{H^{-1}(\Omega)} \right)\\
&\stackrel{\eqref{E-weak}}\leq \|\Lphi^{-1}\|_{\mathcal{L}(H^{-1}(\Omega);H^1_0(\Omega))} \|J'(v(s))\|_{H^{-1}(\Omega)} + \frac 12 \|v(s)-\phi\|_{H^1_0(\Omega)} 
\end{align*}
for $s$ large enough (i.e., $\|v(s)-\phi\|_{H^1_0(\Omega)} \ll 1$ by \eqref{ap-conv}). Hence we find that
\begin{align*}
E(s) &\leq C \|v(s)-\phi\|_{H^1_0(\Omega)}^{2+\gamma}\\
&\leq C \|v(s)-\phi\|_{H^1_0(\Omega)}^\gamma \|\Lphi^{-1}\|_{\mathcal{L}(H^{-1}(\Omega);H^1_0(\Omega))}^2 \|J'(v(s))\|_{H^{-1}(\Omega)}^2\\
&\leq C \|v(s)-\phi\|_{H^1_0(\Omega)}^\gamma \|\Lphi^{-1}\|_{\mathcal{L}(H^{-1}(\Omega);H^1_0(\Omega))}^2 \|J'(v(s))\|_{L^2(\Omega;\phi^{2-q}\d x)}^2\\
&=: \beta(s) \|J'(v(s))\|_{L^2(\Omega;\phi^{2-q}\d x)}^2
\end{align*}
for $s$ large enough. Hence,
\begin{align}
\MoveEqLeft{
J(v(s)) - J(\phi) - \frac 1 2 \sum_{j=1}^{k-1} \sigma_j(s)^2 \frac{\mu_j}{\nu_j}
}\nonumber\\
&\leq \left( \frac 1 {2\nu_k} + \beta(s) \right) \|J'(v(s))\|_{L^2(\Omega;\phi^{2-q}\d x)}^2.\label{J-GI}
\end{align}
We also note that $\beta(s) \to 0$ as $s \to +\infty$, and in particular, we have $\beta(s) < \vep$ for $s \geq s_\vep$ large enough. Thus it follows from \eqref{J-EI} that
\begin{align*}
\MoveEqLeft{
J(v(s)) - J(\phi) - \frac 1 2 \sum_{j=1}^{k-1} \sigma_j(s)^2 \frac{\mu_j}{\nu_j}
}\\
&\leq - \left( \frac{1}{2\nu_k} + \vep \right) (q-1)(1+\vep)^{q-2} \frac{\d}{\d s} J(v(s)) \quad \mbox{ for } \ s \geq s_\vep,
\end{align*}
whence it follows that, for any $0 < \lambda < 2\nu_k/(q-1)$, one can take $s_1 > 0$ such that
\begin{equation*}
J(v(s)) - J(\phi) \leq - \frac 1 {\lambda} \frac{\d}{\d s} \left[ J(v(s)) - J(\phi) \right] \quad \mbox{ for } \ s \geq s_1.
\end{equation*}
Eventually, we conclude that
\begin{align}
0 < J(v(s))-J(\phi) &\leq \left[J(v(s_1))-J(\phi)\right] \e^{-\lambda (s-s_1)}\nonumber\\
&\leq \left[J(v_0)-J(\phi)\right] \e^{\lambda s_1} \e^{-\lambda s}\label{almost_sharp}
\end{align}
for all $s \geq s_1$. It is noteworthy that the exponent
$$
 \frac{2\nu_k}{q-1} = \frac{2}{q-1} \left[ \mu_k - \lambda_q(q-1)\right] = \lamo > 0
$$
is the sharp rate of convergence for solutions to the linearized problem (see \S 1 and~\cite[\S 2]{BF21} with Remark \ref{R:BF21}).

\begin{remark}[Almost sharp rate]\label{R:al_sh}
{\rm In order to verify \eqref{almost_sharp}, we do not need the differentiability of $J''$ at $\phi$ in $H^1_0(\Omega)$. Indeed, the argument so far runs as well even for $E(s) = o(\|v(s)-\phi\|_{H^1_0(\Omega)}^2)$ and $e(s) = o(\|v(s)-\phi\|_{H^1_0(\Omega)})$ as $s \to +\infty$. On the other hand, \eqref{E-weak} will be needed for deriving the sharp rate of convergence (see next section).}
\end{remark}

\section{Convergence with the sharp rate}\label{S:sharp}

Now, let us move on to a proof for the convergence with the sharp rate $\lamo$. We first recall that
\begin{align*}
0 &< J(v(s)) - J(\phi) 
\\
&\leq - \left( \frac{q-1}{2\nu_k} + (q-1)\beta(s) \right) (1+\delta(s))^{q-2} \frac{\d}{\d s} J(v(s))
\end{align*}
and $\beta(s) \leq C \|v(s)-\phi\|_{H^1_0(\Omega)}^\gamma$ for some $\gamma \in (0,1]$. Then we have
\begin{align*}
\left[ \left( \frac{q-1}{2\nu_k} + (q-1)\beta(s) \right) (1+\delta(s))^{q-2}\right]^{-1} \left[ J(v(s)) - J(\phi) \right]\quad
\\
\leq - \frac{\d}{\d s} \left[ J(v(s)) - J(\phi) \right].
\end{align*}
Furthermore, using Theorem 4.1 of~\cite{BF21} on a weighted smoothing effect that allows us to bound quantitatively the uniform relative error in terms of the weighted $L^2$ norm, we can derive an exponential convergence of the relative error from Theorem \ref{T:sc-conv}. More precisely, we have
\begin{lemma}\label{L:re-exp}
If $\|v(s)-\phi\|_{H^1_0(\Omega)} \lesssim \e^{-\mu s}$ for some constant $\mu > 0$ and any $s > 0$ large enough, then there exist constants $C, b, s_* > 0$ such that
$$
\delta(s) = \left\| \frac{v(s)}\phi - 1\right\|_{L^\infty(\Omega)} \leq C \e^{-bs}
$$
for all $s \geq s_*$.
\end{lemma}

\begin{proof}
Since $\Omega$ is a bounded $C^{1,1}$ domain, as in Theorem 4.1 of~\cite{BF21}, we can verify that there exist positive constants $C, L, s_*$ such that
\begin{align}
\MoveEqLeft{
\left\|\frac{v(s)}{\phi}-1\right\|_{L^\infty(\Omega)}
}\nonumber\\
 &\leq C \frac{\e^{L (s-s_0)}}{s-s_0} (1 + s - s_0) \sup_{\sigma \in [s_0,+\infty)} \left( \int_\Omega |v(\sigma)^{q-1}-\phi^{q-1}| \, \d x \right)^{\frac 1N}\nonumber\\
 &\quad + C (s-s_0) \e^{L(s-s_0)}\label{quan-re}
\end{align}
for any $s > s_0 \geq s_*$. Let $s > 0$ be large enough and set $s_0 = s - \e^{-a s}$, where $a$ is a positive number to be determined later. Then
\begin{align}
\MoveEqLeft{
 \left\|\frac{v(s)}{\phi}-1\right\|_{L^\infty(\Omega)}
}\nonumber\\
 &\leq C \frac{\e^{L \e^{-a s}}}{\e^{-a s}} (1 + \e^{-as}) \sup_{\sigma \in [s-\e^{-a s},+\infty)} \left( \int_\Omega |v(\sigma)^{q-1}-\phi^{q-1}| \, \d x \right)^{\frac 1N}\nonumber\\
&\quad + C \e^{-a s} \e^{L \e^{-a s}}.\label{quan-re2}
\end{align}
Moreover, we observe that
\begin{align}
\int_\Omega |v(\sigma)^{q-1}-\phi^{q-1}| \, \d x
&\leq C \int_\Omega \left| v(\sigma)-\phi \right| \, \d x\nonumber\\
&\leq C \|v(\sigma)-\phi\|_{H^1_0(\Omega)},\label{quan-re3}
\end{align}
where the constant $C$ above depend on $\|\phi\|_{L^\infty(\Omega)}$ and $\sup_{\sigma \geq s_*} \|v(\sigma)\|_{L^\infty(\Omega)}$ (see~\cite[Lemma 1]{A16}). 
Thus the assumption yields
$$
\delta(s) = \left\|\frac{v(s)}{\phi}-1\right\|_{L^\infty(\Omega)}
 \leq C \e^L \e^{as} (1 + \e^{-as}) \e^{-\frac{\mu}{N} (s-1)} + C \e^{-a s} \e^L.
$$ 
Hence it suffices to choose $0 < a < \mu/N$.
\end{proof}

Here we remark that the assumption of the lemma above can be verified with the aid of \eqref{almost_sharp} along with Lemma \ref{P:exp-conv} (or Theorem \ref{T:sc-conv} directly). Hence it follows that
$$
\beta(s) + \delta(s) \leq C \e^{-cs} \quad \mbox{ for all } \ s \geq s_*
$$
for some $c,C,s_* > 0 $. Therefore we observe that
\begin{align*}
\MoveEqLeft{
\left( \frac{q-1}{2\nu_k} + (q-1)\beta(s) \right) (1+\delta(s))^{q-2}
}\\
&\leq \frac{q-1}{2\nu_k} \left(1 + C \e^{-ds}\right) \quad \mbox{ for all } \ s \geq s_*
\end{align*}
for some $d,C > 0$. Hence
\begin{align*}
\MoveEqLeft{
\left[ \left( \frac{q-1}{2\nu_k} + (q-1)\beta(s) \right) (1+\delta(s))^{q-2} \right]^{-1}
}\\
&\geq \frac{2\nu_k}{q-1} \left(1 - \frac{C \e^{-ds}}{1+C \e^{-ds}} \right)
\geq \frac{2\nu_k}{q-1} \left(1 - C \e^{-ds}\right)
\end{align*}
for $s \geq s_*$. Thus $H(s) := J(v(s))-J(\phi) > 0$ satisfies
\begin{align*}
\frac{2\nu_k}{q-1} H(s)
&\leq - \frac{\d}{\d s} H(s) + C \e^{-d s} H(s)
\end{align*}
for $s \geq s_*$. Solving the differential inequality above, one deduces that
$$
H(s) \leq H(s_*) \e^{C/d} \exp\left( - \frac{2\nu_k}{q-1} (s-s_*)\right)
$$
for $s \geq s_*$. Thus we have proved the assertion of Theorem \ref{T:main} for $q \geq 3$. It remains only to prove Lemma \ref{L:Taylor} for the case that $2 < q < 3$, and it will be performed in the next section.

\section{The case where $2 < q < 3$}\label{S:2<q<3}

In this section, we shall prove Lemma \ref{L:Taylor} for $2 < q < 3$ to complete the proof of Theorem \ref{T:main}. It is standard that $J$ is of class $C^2$ in $H^1_0(\Omega)$ in the sense of Fr\'echet derivative and $J''(w) = - \Delta - \lambda_q (q-1)|w|^{q-2}$ for $w \in H^1_0(\Omega)$ (see, e.g.,~\cite[Corollary 1.13]{Willem}). On the other hand, $J'' :H^1_0(\Omega) \to \mathscr{L}(H^1_0(\Omega),H^{-1}(\Omega))$ may not be even G\^ateaux differentiable at $\phi$ anymore; however, it can be so in a stronger topology. We shall first claim that $J''$ is G\^ateaux differentiable at $\phi_\theta := \phi + \theta(v(s)-\phi) = (1-\theta) \phi + \theta v(s) > 0$ a.e.~in $\Omega$ for any $\theta \in [0,1]$ and $s > s_1 $ (see \eqref{ghp})  in the strong topology of
$$
X_1 := \left\{w \in H^1_0(\Omega) \colon w \phi^{\frac{q-3}2} \in L^{2\cdot2_*}(\Omega) \right\},
$$ 
where $2_* := (2^*)' = 2N/(N+2)$, equipped with the norm
$$
\|w\|_{X_1}^2 := \|w\|_{H^1_0(\Omega)}^2 + \|w \phi^{\frac{q-3}2}\|_{L^{2\cdot 2_*}(\Omega)}^2 \quad \mbox{ for } \ w \in X_1.
$$
Then $X_1 \hookrightarrow H^1_0(\Omega)$. Hence (the restriction) $J' : X_1 \to H^{-1}(\Omega)$ (onto $X_1$) turns out to be of class $C^1$ in $X_1$ in the sense of Fr\'echet derivative, and moreover, its derivative (still denoted by $J''$) can be regarded as a continuous map from $X_1$ into $\mathscr{L}(X_1,H^{-1}(\Omega))$. Let $u,e \in X_1$ and $t \neq 0$. Since $\phi_\theta = (1-\theta)\phi + \theta v(s) > 0$ a.e.~in $\Omega$ for $s > s_1 $, it then follows that
\begin{align*}
\MoveEqLeft{
\left| \frac{ [J''(\phi_\theta+te)](u) - [J''(\phi_\theta)](u)}t + \lambda_q (q-1) (q-2) \phi_\theta^{q-3} eu \right|
}\\
&= \lambda_q (q-1) \left| \frac{|\phi_\theta+te|^{q-2} - \phi_\theta^{q-2}}t - (q-2) \phi_\theta^{q-3}e \right| |u| \to 0 
\end{align*}
a.e.~in $\Omega$ as $t \to 0$. Moreover,
\begin{align}
 \left| \frac{|\phi_\theta+te|^{q-2} - \phi_\theta^{q-2}}t - (q-2) \phi_\theta^{q-3}e \right| |u|
\leq (q-1)\phi_\theta^{q-3} |e| |u|.
\label{J3-domin}
\end{align}
Here we used the fact that $0 < q-2 < 1$ and the inequality
\begin{equation}\label{fun_ineq}
|a^p - b^p| \leq a^{p-1}|a-b| \quad \mbox{ for any } \ a,b > 0 \ \mbox{ and } \ p \in (0,1). 
\end{equation}
Then the right-hand side of \eqref{J3-domin} belongs to $L^{2_*}(\Omega) \simeq (L^{2^*}(\Omega))^* \hookrightarrow H^{-1}(\Omega)$ due to the following fact:
\begin{align*}
|\phi_\theta^{\frac{q-3}2} u| &= \left| (1-\theta) \phi + \theta v(s) \right|^{\frac{q-3}2} |u|\\
&= \left| 1 - \theta + \theta (v(s)/\phi) \right|^{\frac{q-3}2} \phi^{\frac{q-3}2} |u|
\leq C \phi^{\frac{q-3}2} |u| \in L^{2\cdot 2_*}(\Omega).
\end{align*}
Indeed, $v(s)/\phi \geq 1/2$ a.e.~in $\Omega$ for $s > s_1$ (see \eqref{ghp}). Using Lebesgue's dominated convergence theorem, we can then deduce that $J'' : X_1 \to \mathscr{L}(X_1,H^{-1}(\Omega))$ is G\^ateaux differentiable at $\phi_\theta$. Moreover, we observe that the G\^ateaux derivative $\D_G J''(\phi_\theta) = -\lambda_q (q-1) (q-2) \phi_\theta^{q-3}$ of $J''$ at $\phi_\theta$ is bounded in $\mathscr{L}^{(2)}(X_1,H^{-1}(\Omega))$ for $\theta \in [0,1]$. Hence employing Taylor's theorem (see Theorem \ref{T:Taylor} in Appendix) and recalling $J'(\phi)=0$ and $J''(\phi) = \Lphi$, we can still verify that
\begin{equation*}
J'(v(s)) = \Lphi(v(s)-\phi) + \epsilon_1(v(s)-\phi),
\end{equation*}
where $\epsilon_1 : X_1 \to H^{-1}(\Omega)$ is a generic function fulfilling 
$$
\lim_{\|w\|_{X_1}\to 0} \frac{\|\epsilon_1(w)\|_{H^{-1}(\Omega)}}{\|w\|_{X_1}^2} < +\infty.
$$ 

In particular, we put $w = v(s)-\phi$. Then noting that $\|w/\phi\|_{L^\infty(\Omega)} = \|(v(s)-\phi)/\phi\|_{L^\infty(\Omega)}$ is uniformly bounded for $s > s_1$  (see \eqref{ghp}), we infer that
\begin{align*}
\|w\phi^{\frac{q-3}2}\|_{L^{2\cdot 2_*}(\Omega)}^2
&= \|(w/\phi)^{3-q} |w|^{q-1}\|_{L^{2_*}(\Omega)}\\
&\leq \|w/\phi\|_{L^\infty(\Omega)}^{3-q} \||w|^{q-1}\|_{L^{2_*}(\Omega)}
\leq C \|w/\phi\|_{L^\infty(\Omega)}^{3-q} \|w\|_{H^1_0(\Omega)}^{q-1},
\end{align*}
and hence, we observe that
\begin{align*}
\|w\|_{X_1}^2 &= \|w\|_{H^1_0(\Omega)}^2 + \|w\phi^{\frac{q-3}2}\|_{L^{2\cdot 2_*}(\Omega)}^2\\
&\leq \|w\|_{H^1_0(\Omega)}^2 + C \|w/\phi\|_{L^\infty(\Omega)}^{3-q} \|w\|_{H^1_0(\Omega)}^{q-1}.
\end{align*}
Set
$$
e(s) = \epsilon_1(v(s)-\phi),
$$
whence it follows that
$$
\|e(s)\|_{H^{-1}(\Omega)} \leq C\|v(s)-\phi\|_{H^1_0(\Omega)}^{1+ (q-2)} \quad \mbox{ for } \ s \gg 1.
$$

Similarly, setting 
$$
X_2 := \left\{w \in H^1_0(\Omega) \colon w \phi^{\frac{q-3}3} \in L^3(\Omega) \right\}
$$
equipped with
$$
\|w\|_{X_2}^3 := \|w\|_{H^1_0(\Omega)}^3 + \| w \phi^{\frac{q-3}3} \|_{L^3(\Omega)}^3 \quad \mbox{ for } \ w \in X_2, 
$$
(then $X_2 \hookrightarrow H^1_0(\Omega)$) and repeating the same argument as above again, we can prove that (the restriction) $J'': X_2 \to \mathscr{L}^{(2)}(X_2,\R)$ is G\^ateaux differentiable at $\phi_\theta$ in $X_2$ for any $\theta \in [0,1]$, and moreover, the G\^ateaux derivative $\D_G J''(\phi_\theta)$ is bounded in $\mathscr{L}^{(3)}(X_2,\R)$ for $\theta \in [0,1]$. Hence it follows that
\begin{align*}
 J(v(s)) = J(\phi) + \frac 1 2 \left\langle \Lphi(v(s)-\phi), v(s)-\phi\right\rangle_{H^1_0(\Omega)} + \epsilon_2(v(s)-\phi),
\end{align*}
where $\epsilon_2 : X_2 \to \R$ is a generic function satisfying
$$
\lim_{\|w\|_{X_2} \to 0} \dfrac{|\epsilon_2(w)|}{\|w\|_{X_2}^3} < +\infty
$$
(see Theorem \ref{T:Taylor} in Appendix). Put $w = v(s)-\phi$ again. Then we find that
\begin{align*}
 \|w \phi^\frac{q-3}3\|_{L^3(\Omega)}^{3}
 &\leq \|w/\phi\|_{L^\infty(\Omega)}^{3-q} \|w\|_{L^q(\Omega)}^q\\
 &\leq C \|w/\phi\|_{L^\infty(\Omega)}^{3-q} \|w\|_{H^1_0(\Omega)}^q
\end{align*}
and that
$$
\|w\|_{X_2}^3 \leq \|w\|_{H^1_0(\Omega)}^3 + C \|w/\phi\|_{L^\infty(\Omega)}^{3-q} \|w\|_{H^1_0(\Omega)}^q.
$$
Set $E(s) = \epsilon_2(v(s)-\phi)$. Then we obtain
$$
|E(s)| \leq C\|v(s)-\phi\|_{H^1_0(\Omega)}^{2+(q-2)} \quad \mbox{ for } \ s \gg 1.
$$
Thus we have checked \eqref{J-exp} and \eqref{J'-exp} with $E(\cdot)$ and $e(\cdot)$ satisfying \eqref{E-weak} with $\gamma = q-2 \in (0,1)$, and hence, we have completed the proof of Lemma \ref{L:Taylor} for $2 < q < 3$ as well. \qed

\bigskip
Thus the proof of Theorem \ref{T:main} has been completed. We close this section with the following remark on assumptions for domains based on the arguments so far.

\begin{remark}[Assumption for domains]\label{R:C11}
All the results in \S \ref{S:Intro} can be proved for arbitrary bounded $C^{1,1}$ domains. The $C^{1,1}$ condition for domains is needed for: (i) the $C^2(\Omega) \cap C^1(\overline{\Omega})$ regularity of solutions $\phi$ to \eqref{eq:1.10}, \eqref{eq:1.11} (see, e.g.,~\cite[Theorems 9.15 and 9.19]{GT}), (ii) Hopf's lemma (see, e.g.,~\cite[\S 6.4.2]{Evans}; indeed, the interior sphere condition follows from the $C^{1,1}$ condition) and (iii) the proof for Lemma \ref{L:re-exp} in \S \ref{S:sharp}. To be more precise for (iii), in the proof of Lemma \ref{L:re-exp}, a quantitative estimate (see \eqref{quan-re}) established in Theorem 4.1 of~\cite{BF21} is employed and the estimate is proved with the use of Green function estimates under the $C^{1,1}$ condition (see~\cite{GrWi82,ChZha95}).
\end{remark}

\section{Proofs of corollaries}\label{S:Cor}

This section is devoted to proving corollaries exhibited in \S \ref{S:Intro}. We first give a proof of Corollary \ref{C:exp-stbl}.

\begin{proof}[Proof of Corollary \ref{C:exp-stbl}]
It is well known that every non-degenerate nontrivial solution to \eqref{eq:1.10}, \eqref{eq:1.11} is isolated in $H^1_0(\Omega)$ from all the other solutions (see, e.g.,~\cite[\S 5.3]{AK13}). Moreover, we recall Theorem 2 of~\cite{AK13}: Let $\varphi$ be a least-energy solution of \eqref{eq:1.10}, \eqref{eq:1.11}. If $\varphi$ is isolated in $H^1_0(\Omega)$ from all the other (sign-definite) solutions of \eqref{eq:1.10}, \eqref{eq:1.11}, then $\varphi$ is an asymptotically stable profile in the sense of Definition \ref{D:stbl}. Therefore since $\phi$ is isolated from all the other solutions to \eqref{eq:1.10}, \eqref{eq:1.11} and takes the least energy among all the nontrivial solutions of \eqref{eq:1.10}, \eqref{eq:1.11}, it turns out to be an asymptotically stable asymptotic profile in the sense of Definition \ref{D:stbl}. Hence, any (possibly sign-changing) weak solution $v = v(x,s)$ of \eqref{eq:1.6}--\eqref{eq:1.8} emanating from some small (in $H^1_0(\Omega)$) neighbourhood $B_{H^1_0(\Omega)}(\phi;\delta)$ of $\phi$ on the phase set $\mathcal{X}$ (see \eqref{phase_set}) converges to $\phi$ strongly in $H^1_0(\Omega)$ as $s \to +\infty$. Therefore Theorem \ref{T:sc-conv} can guarantee the exponential convergence. Here we note that the constant $M_\mu$ in Theorem \ref{T:sc-conv} can be chosen so as to be independent of $v_0$, whenever $\|v_0 - \phi\|_{H^1_0(\Omega)} < \delta$. Thus the exponential stability of $\phi$ has been proved.
\end{proof}

We next prove Corollary \ref{C:entropy}.

\begin{proof}[Proof of Corollary \ref{C:entropy}]
Recalling \eqref{J-EI} and \eqref{J-GI}, we see that
\begin{align*}
 \|J'(v(s))\|_{L^2(\Omega;\phi^{2-q}\d x)}
 \leq - C \dfrac{\d}{\d s} \left[ J(v(s))-J(\phi) \right]^{1/2},
\end{align*}
whence it follows from Theorem \ref{T:main} that
\begin{align*}
 \MoveEqLeft{
 \left\| \phi^{q-1} - v^{q-1}(s) \right\|_{L^2(\Omega;\phi^{2-q}\d x)}
 }\\
 &\leq
 \int^\infty_s \left\| \partial_s \left( v^{q-1} \right) (\sigma) \right\|_{L^2(\Omega;\phi^{2-q}\d x)} \, \d \sigma\\
 &\leq C \left[ J(v(s)) - J(\phi) \right]^{1/2} 
 \leq C \e^{- \frac{\lamo}2 s}.
\end{align*}
On the other hand, we observe that
\begin{align*}
\MoveEqLeft{
\int_\Omega |v(x,s) - \phi(x)|^2 \phi(x)^{q-2} \, \d x 
}\\
&\leq \int_\Omega \left|v(x,s)^{q-1} - \phi(x)^{q-1}\right|^2 \phi(x)^{2-q} \, \d x.
\end{align*}
Here we used \eqref{fun_ineq}. Thus \eqref{ent-conv} follows immediately.
\end{proof}

Let us give a proof for Corollary \ref{C:H10}.
\begin{proof}[Proof of Corollary \ref{C:H10}]
As in \eqref{J->H10} and \S \ref{S:almost_sharp} (see also Lemma \ref{L:Taylor}), we observe that
\begin{align*}
\MoveEqLeft{
J(v(s))- J(\phi)
}\\
&=  \frac 12 \|\nabla (v(s)-\phi)\|_{L^2(\Omega)}^2 - \dfrac{\lambda_q}2 (q-1) \int_\Omega |v(s)-\phi|^2 \phi^{q-2} \, \d x\\
&\quad + O \left( \|v(s)-\phi\|_{H^1_0(\Omega)}^{2+\gamma}\right)
\end{align*}
for some $\gamma \in (0,1]$. Consequently, Theorem \ref{T:main} and Corollary \ref{C:entropy} yield
$$
\|v(s)-\phi\|_{H^1_0(\Omega)}^2 \leq C \e^{-\lamo s} \quad \mbox{ for } \ s \geq 0.
$$
Finally, \eqref{J'-conv} follows immediately from \eqref{J'-exp}. This completes the proof.
\end{proof}

From the argument above, we can also observe the following:
\begin{corollary}\label{C:rate}
Under the same assumption as in Theorem \ref{T:main}, if \eqref{sharp-rate} holds for some $\lambda > 0$, then \eqref{ent-conv} and \eqref{H10-conv} hold for the same $\lambda$.
\end{corollary}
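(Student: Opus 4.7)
The plan is to observe that the proofs of Corollaries \ref{C:entropy} and \ref{C:H10} use the sharp rate $\lamo$ only through the hypothesis \eqref{sharp-rate}, and not through any specific structural property of the number $\lamo$ itself. Thus, one may simply rerun those two proofs with $\lamo$ replaced by an arbitrary rate $\lambda > 0$ for which \eqref{sharp-rate} holds.

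First, to derive \eqref{ent-conv} with the same $\lambda$, I reproduce the gradient-inequality argument behind Corollary \ref{C:entropy}. The energy inequality \eqref{J-EI} yields
$$
\|J'(v(s))\|_{L^2(\Omega;\phi^{2-q}\d x)}^2 \leq - C\,\frac{\d}{\d s}\bigl[J(v(s))-J(\phi)\bigr],
$$
while \eqref{J-GI}, combined with the observation that the subtracted sum $\tfrac{1}{2}\sum_{j<k}\sigma_j(s)^2\mu_j/\nu_j$ is non-positive (since $\nu_j<0$ for $j<k$), gives
$$
J(v(s))-J(\phi) \leq C\,\|J'(v(s))\|_{L^2(\Omega;\phi^{2-q}\d x)}^2.
$$
Chaining these two estimates, exactly as in the proof of Corollary \ref{C:entropy}, yields
$$
\|J'(v(s))\|_{L^2(\Omega;\phi^{2-q}\d x)} \leq -C\,\frac{\d}{\d s}\bigl[J(v(s))-J(\phi)\bigr]^{1/2}.
$$
Integrating on $[s,\infty)$, using $\partial_s(v^{q-1})=-J'(v)$, and applying \eqref{sharp-rate}, I obtain $\|\phi^{q-1}-v^{q-1}(s)\|_{L^2(\Omega;\phi^{2-q}\d x)} \leq C\,\e^{-\lambda s/2}$. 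Squaring and invoking the pointwise inequality $|a^p-b^p|\leq a^{p-1}|a-b|$ with $p=1/(q-1)$ (as in the proof of Corollary \ref{C:entropy}) delivers \eqref{ent-conv} with the same exponent $\lambda$.

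Second, to derive \eqref{H10-conv}, I invoke the Taylor expansion around $\phi$ established in \S\ref{S:almost_sharp} (and \S\ref{S:2<q<3} when $2<q<3$), which produces $\gamma\in(0,1]$ with
$$
J(v(s))-J(\phi) = \tfrac{1}{2}\|\nabla(v(s)-\phi)\|_{L^2(\Omega)}^2 - \tfrac{\lambda_q(q-1)}{2}\int_\Omega |v(s)-\phi|^2\phi^{q-2}\,\d x + O\bigl(\|v(s)-\phi\|_{H^1_0(\Omega)}^{2+\gamma}\bigr).
$$
The LHS is $\leq C\e^{-\lambda s}$ by hypothesis, and the weighted $L^2$-term is $\leq C\e^{-\lambda s}$ by the \eqref{ent-conv} just proved. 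Since $\|v(s)-\phi\|_{H^1_0(\Omega)}\to 0$ by Lemma \ref{P:exp-conv}, the remainder $\|v(s)-\phi\|_{H^1_0(\Omega)}^{2+\gamma} = \|v(s)-\phi\|_{H^1_0(\Omega)}^{\gamma}\cdot\|\nabla(v(s)-\phi)\|_{L^2(\Omega)}^2$ can, for $s$ large enough, be absorbed into the quadratic term on the right by a fixed fraction (say $\tfrac{1}{4}$), leaving $\|\nabla(v(s)-\phi)\|_{L^2(\Omega)}^2 \leq C\e^{-\lambda s}$.

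The main (and minor) obstacle is making sure that the absorption of the cubic-type remainder does not degrade the exponential rate; this is handled cleanly by the qualitative smallness $\|v(s)-\phi\|_{H^1_0(\Omega)}\to 0$, which is available unconditionally from Lemma \ref{P:exp-conv}. No new ingredient beyond the proofs of Corollaries \ref{C:entropy} and \ref{C:H10} is needed, so the result is essentially an observation extracted from those proofs.
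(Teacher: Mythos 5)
Your proposal is correct and is exactly the paper's (implicit) argument: the paper dispatches Corollary \ref{C:rate} with the single remark ``From the argument above, we can also observe the following,'' and what you have written is precisely that observation spelled out — the proofs of Corollaries \ref{C:entropy} and \ref{C:H10} feed $\lamo$ in only through the decay hypothesis \eqref{sharp-rate}, so replacing $\lamo$ by any $\lambda$ for which \eqref{sharp-rate} holds leaves the gradient-inequality integration (for the entropy bound) and the Taylor-expansion absorption (for the $H^1_0$ bound) unchanged, and the qualitative convergence $\|v(s)-\phi\|_{H^1_0(\Omega)}\to 0$ needed for the absorption is already available.
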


With the aid of the regularity results~\cite{JiXi01,JiXi02}, one can also improve the topology of the relative error convergence (respectively, convergence of the difference) up to $C^q(\overline{\Omega})$ (respectively, $C^{q+1}(\overline{\Omega})$) for \emph{smooth} domains (see~\cite[Corollary 1.4]{JiXi01}).

\section{Fast diffusion flows with changing signs}\label{S:sc-beh}

Although asymptotic behavior of \emph{sign-definite} solutions to the fast diffusion equation has been well studied, dynamics of \emph{sign-changing} ones has not yet been fully pursued. In particular, since sign-changing asymptotic profiles for fast diffusion are often unstable (see~\cite{AK13}), existence of (non-stationary) weak solutions of \eqref{eq:1.6}--\eqref{eq:1.8} converging to sign-changing solutions of \eqref{eq:1.10}, \eqref{eq:1.11} may be still rather nontrivial. In this section, we shall discuss such dynamics of fast diffusion flows with changing signs.

\subsection{One-dimensional case}

We first restrict ourselves to the one-dimensional case $\Omega = (0,1)$, where the set $\{\pm\phi_k \colon k \in \N\}$ of all non-trivial solutions to \eqref{eq:1.10}, \eqref{eq:1.11} consists of the unique positive solution $\phi_1 > 0$ and sign-changing ones $\phi_k$ given by
$$
\phi_k(x) = (-1)^j k^{2/(q-2)}\phi_1(kx - j), \quad x \in (j/k,(j+1)/k)
$$
for $j = 0,1,\ldots, k-1$. Hence $\pm\phi_k$ have $k-1$ zeros arranged at equal intervals in $(0,1)$ and $J(\pm\phi_1) < J(\pm\phi_2) < \cdots < J(\pm\phi_k) \to +\infty$ as $k \to +\infty$ (see~\cite[\S 5.4]{AK13} for more details). Moreover, one can verify that $\phi_k$ is non-degenerate in a standard way. Note that, for any non-negative data $u_0 \in H^1_0(0,1) \setminus \{0\}$, the solution to \eqref{eq:1.1}--\eqref{eq:1.3} with $\Omega = (0,1)$ has the positive asymptotic profile $\phi_1$ in the sense of \eqref{ap}. Furthermore, for each $k \in \N$, we can construct a solution $u = u(x,t)$ (of \eqref{eq:1.1}--\eqref{eq:1.3}) whose asymptotic profile coincides with $\phi_k$. Indeed, for instance, set $u_0(x) = \sin (k\pi x)$ for $x \in (0,1)$. Then all the zeros of $u(\cdot,t)$ do not move for $t \geq 0$. Hence the dynamics of $u(\cdot,t)$ restricted on each subinterval $(j/k,(j+1)/k)$ is reduced to those of sign-definite solutions. 

We can also construct sign-changing initial data $u_0 \in H^1_0(0,1) \setminus \{0\}$ such that the corresponding solutions of \eqref{eq:1.1}--\eqref{eq:1.3} have sign-definite asymptotic profiles and sign-changing ones having fewer zeros; hence, some zeros of such solutions move and eventually vanish. Let $u = u(x,t)$ be the solution for \eqref{eq:1.1}--\eqref{eq:1.3} in $\Omega = (0,1)$ with a smooth initial datum $u_0$ which is even with respect to $x = 1/2$, negative in $(0,a) \cup (1-a,1)$ and positive in $(a,1-a)$ for some $a \in (0,1/2)$ such that
$$
\int^1_0 (|u_0|^{q-2}u_0)(x) \, \d x > 0
$$
(hence $u_0$ has exactly two zeros in $(0,1)$). Then $u(\cdot,t)$ is also even with respect to $x=1/2$ for $t > 0$. Integrating both sides of \eqref{eq:1.1} over $\Omega = (0,1)$ and utilizing the evenness of $u(\cdot,t)$ with respect to $x = 1/2$, we observe that
$$
\frac\d{\d t} \int^1_0 (|u|^{q-2}u)(x,t) \, \d x - 2 \partial_x u(1,t) = 0.
$$
Now, suppose to the contrary that $\partial_x u(1,t) \geq 0$ for all $t \geq 0$. Then one gets
$$
\int^1_0 (|u|^{q-2}u)(x,t) \, \d x \geq \int^1_0 (|u_0|^{q-2}u_0)(x) \, \d x > 0
\quad \mbox{ for all } \ t \geq 0,
$$
which is a contradiction to the finite-time extinction of $u = u(x,t)$. Hence $\partial_x u(1,t_0) < 0$ at some $t_0 \in (0,t_*)$. Since the number of zeros of $u(\cdot,t)$ is non-increasing in $t$, $u(\cdot,t_0)$ must be non-negative in $\Omega = (0,1)$ (see, e.g.,~\cite{GalVaz04}). Therefore the solution $u = u(x,t)$ has the positive asymptotic profile $\phi_1$. Furthermore, for each $k \in \N$, extending the function $u_0$ considered above to be an anti-periodic function in $(0,k)$, i.e., $u_0(x+1) = - u_0(x)$ for $x \in (0,k-1)$, one can construct a sign-changing solution (for \eqref{eq:1.1}--\eqref{eq:1.3} with $\Omega = (0,k)$) which has a sign-changing asymptotic profile with fewer zeros (than its initial datum). 

\subsection{Multi-dimensional case}

The multi-dimensional case is more complicated; indeed, the structure of nontrivial solutions to \eqref{eq:1.10}, \eqref{eq:1.11} is not so simple as in the one-dimensional case. It is already difficult to check the non-degeneracy of sign-changing solutions (indeed, even in balls, although the positive solution is unique and non-degenerate, there exist non-radial sign-changing solutions, which are degenerate; see~\cite[Theorem 1.3]{AfPa04}). 

We shall consider dumbbell-shaped domains in $\R^N$. Set 
$$
B = B_+ \cup B_- \subset \R^N,
$$
where $B_\pm$ denotes the open unit ball in $\R^N$ centered at $x = \pm 2 e_1$, respectively, with a unit vector $e_1 \in \R^N$ (hence $\overline{B_+} \cap \overline{B_-} = \emptyset$) and let $C = \{t e_1 \colon t \in [-1,1]\}$. Moreover, let $(\Omega_n)$ be a sequence of smooth bounded domains of $\R^N$ involving $\overline{B} \cup C$ and symmetric with respect to the hyperplane 
$$
H := \{x \in \R^N \colon x \cdot e_1 = 0\}
$$
through the origin such that $\Omega_n \to B$ in a proper sense as $n \to +\infty$ (see~\cite[p.122]{Dancer1} for more details). Furthermore, let $\tilde{B} \subset \R^N$ be a ball including $\Omega_n$ for $n$ large enough.

In what follows, we let $\phi_{+-} \in H^1_0(B)$ coincide with the positive and negative radial solutions to \eqref{eq:1.10}, \eqref{eq:1.11} in $B_+$ and $B_-$, respectively (thanks to~\cite{GNN}, positive solutions in balls are radial and unique). Then $\phi_{+-}$ turns out to be a non-degenerate solution to \eqref{eq:1.10}, \eqref{eq:1.11} with $\Omega = B$ (indeed, the restriction of $\phi_{+-}$ onto each of the disjoint balls is non-degenerate due to~\cite{Pacella}). Thanks to~\cite[(i) of Theorem 1]{Dancer1}, for each $n \in \N$ large enough, there exists a \emph{non-degenerate} solution $\phi_n \in H^1_0(\Omega_n)$ of \eqref{eq:1.10}, \eqref{eq:1.11} with $\Omega = \Omega_n$ uniquely corresponding to $\phi_{+-}$ in the sense that $\phi_n \to \phi_{+-}$ strongly in $L^q(\tilde{B})$ as $n \to +\infty$ and $\phi_n$ is the only solution in $H^1_0(\Omega_n)$ close to $\phi_{+-}$ in $L^q(\tilde{B})$. Here and henceforth, we use the same notation for functions of class $H^1_0(B)$ (or $H^1_0(\Omega_n)$) and their zero extensions onto $\tilde{B}$, when no confusion can arise. Hence $\phi_n$ is \emph{sign-changing} for $n \in \N$ large enough, since so is $\phi_{+-}$. Then $(\Omega_n,\phi_n)$ will turn out to be our desired domain and asymptotic profile for fast diffusion for $n \in \N$ large enough. This fact will be precisely stated in Theorem \ref{P:odd-stbl} below. 

To this end, let us first recall several materials developed in~\cite{AK13}. The set of initial data for \eqref{eq:1.6}--\eqref{eq:1.8} via the scaling \eqref{cv} is defined as
\begin{align*}
\mathcal{X}(\Omega) :=& \left\{ t_*(u_0)^{-1/(q-2)} u_0 \colon u_0 \in H^1_0(\Omega) \setminus \{0\} \right\}\\
=& \left\{ v_0 \in H^1_0(\Omega) \colon t_*(v_0) = 1 \right\}
\end{align*}
(see~\cite[Proposition 6]{AK13} for the equality). It is noteworthy that $\mathcal{X}(\Omega)$ is homeomorphic to the unit sphere in $H^1_0(\Omega)$ (see~\cite[Proposition 10]{AK13}). We denote by $\mathcal{S}(\Omega)$ the set of all nontrivial solutions to \eqref{eq:1.10}, \eqref{eq:1.11}. We may simply write $\mathcal{X}$ and $\mathcal{S}$ instead of $\mathcal{X}(\Omega)$ and $\mathcal{S}(\Omega)$, respectively, when no confusion can arise. Then the following proposition holds true:
\begin{proposition}[Properties of the set of initial data~\cite{AK13}]\label{P:X}
It holds that\/{\rm :}
\begin{enumerate}
 \item[(i)] The set $\mathcal{S}$ is included in $\mathcal{X}$ {\rm (}see~\cite[Proposition 10]{AK13}{\rm )}.
 \item[(ii)] Moreover, the weak solution $v = v(x,s)$ emanating from $v_0 \in \mathcal{X}$ quasi-converges to a nontrivial solution for \eqref{eq:1.10}, \eqref{eq:1.11} {\rm (}see~\cite[Theorem 1]{AK13} and {\rm \S \ref{S:Intro}}{\rm )}. 
 \item[(iii)] Furthermore, $\mathcal{X}$ is an invariant set of the dynamical system generated by \eqref{eq:1.6}--\eqref{eq:1.8} {\rm (}see~\cite[Proposition 5]{AK13}{\rm )}.
 \item[(iv)] The set $\mathcal{X}$ is sequentially closed in the weak topology of $H^1_0(\Omega)$ {\rm (}see~\cite[Proposition 7]{AK13}{\rm )}.
\end{enumerate} 
\end{proposition}

Moreover, let $\mathcal{S}(B)$ be defined as above and let $\mathcal{S}_{H}(B)$ be its subset whose elements are odd with respect to the hyperplane $H$, that is, $\phi \in \mathcal{S}_H(B)$ means $\phi \in \mathcal{S}(B)$ and $\phi(x) = - \phi(\mathrm{Ref}_H(x))$ for $x \in B$, where $\mathrm{Ref}_H(x) := x - 2 (x \cdot e_1) e_1$ stands for the reflection of $x$ with respect to the hyperplane $H$. In particular, $\phi_{+-} \in \mathcal{S}_H(B)$. Moreover, set 
$$
J_B(w) := \frac 12 \int_B |\nabla w(x)|^2 \, \d x - \frac{\lambda_q}q \int_B |w(x)|^q \, \d x \quad \mbox{ for } \ w \in H^1_0(B).
$$
We define $\mathcal{S}(\Omega_n)$, $\mathcal{S}_H(\Omega_n)$ and $J_{\Omega_n}$ in an analogous way. Then we claim that 
$$
\phi_n \in \mathcal{S}_H(\Omega_n) 
$$
for $n \in \N$ large enough. Indeed, since $\phi_n \to \phi_{+-}$ strongly in $L^q(\tilde{B})$ as $n \to +\infty$, we find from the symmetry of $\Omega_n$ that $-\phi_n(\mathrm{Ref}_H(\cdot)) \to -\phi_{+-}(\mathrm{Ref}_H(\cdot)) = \phi_{+-}$ strongly in $L^q(\tilde{B})$ as $n \to +\infty$. From the uniqueness of $(\phi_n)$ (see~\cite[(i) of Theorem 1]{Dancer1}), we find that $\phi_n$ coincides with $-\phi_n(\mathrm{Ref}_H(\cdot))$, i.e., $\phi_n \in \mathcal{S}_H(\Omega_n)$, for $n \in \N$ large enough. Furthermore, we set
$$
\mathcal{X}_H(\Omega_n) = \left\{ w \in \mathcal{X}(\Omega_n) \colon w \mbox{ is odd with respect to the hyperplane } H \right\}.
$$
Then all the assertions of Proposition \ref{P:X} with $\mathcal{X}$ and $\mathcal{S}$ replaced by $\mathcal{X}_H$ and $\mathcal{S}_H$, respectively, hold true, since the oddness of initial data is inherited by the solutions to \eqref{eq:1.6}--\eqref{eq:1.8} (see~\cite[Theorem 2.5]{INdAM13}). Moreover, we stress that for any $w \in H^1_0(\Omega_n) \setminus \{0\}$ which is odd with respect to the hyperplane $H$ one can take a constant $x(w) > 0$ such that $x(w)w$ lies on the set $\mathcal{X}_H(\Omega_n)$ (more precisely, we have $x(w) = t_*(w)^{-1/(q-2)}$).

The following theorem ensures exponential stability of the asymptotic profile $\phi_n$, which is sign-changing and non-degenerate, in $\mathcal{X}_H(\Omega_n)$:
\begin{theorem}[Exponential stability of $\phi_n$ in $\mathcal{X}_H(\Omega_n)$]\label{P:odd-stbl}
Let $(\Omega_n)$ and $(\phi_n)$ be defined as above. Then, for any $n \in \N$ large enough, $\phi_n$ is exponentially stable under the dynamical system generated by \eqref{eq:1.6}--\eqref{eq:1.8} in $\mathcal{X}_H(\Omega_n)$, that is, for any $\vep > 0$ there exists $\delta_{n,\vep} > 0$ such that any weak solution $v = v(x,s)$ to \eqref{eq:1.6}--\eqref{eq:1.8} with $\Omega = \Omega_n$ satisfies
$$
\sup_{s \geq 0} \|v(s)-\phi_n\|_{H^1_0(\Omega_n)} < \vep,
$$
provided that $v(0) \in \mathcal{X}_H(\Omega_n)$ and $\|v(0)-\phi_n\|_{H^1_0(\Omega_n)} < \delta_{n,\vep}$\/{\rm ;} moreover, there exist constants $C_n, \lambda_n,\delta_{n,0} > 0$ such that any weak solution $v = v(x,s)$ to \eqref{eq:1.6}--\eqref{eq:1.8} with $\Omega = \Omega_n$ fulfills
$$
\|v(s)-\phi_n\|_{H^1_0(\Omega)} \leq C_n \e^{-\lambda_n s/2} \quad \mbox{ for all } \ s \geq 0,
$$
provided that $v(0) \in \mathcal{X}_H(\Omega_n)$ and $\|v(0)-\phi_n\|_{H^1_0(\Omega_n)} < \delta_{n,0}$. Here $\lambda_n$ can be chosen as in \eqref{exp-est} for $\phi = \phi_n$ and $\Omega = \Omega_n$.
\end{theorem}

Before proving this theorem, we recall Theorem 3 of~\cite{AK13}: Let $\psi$ be a sign-changing profile of a solution of \eqref{eq:1.1}--\eqref{eq:1.3}. If $\psi$ is isolated in $H^1_0(\Omega)$ from all the other solutions, then $\psi$ is unstable in the sense of Definition of \ref{D:stbl}. Therefore $\phi_n$ turns out to be unstable in $\mathcal{X}(\Omega_n)$, whose elements are not always odd, since $\phi_n$ is sign-changing and non-degenerate (hence isolated in $H^1_0(\Omega_n)$).

To prove Theorem \ref{P:odd-stbl}, we need the following:
\begin{lemma}\label{L:Sset0}
 There exists a constant $r_0 > 0$ such that
\begin{equation}\label{Sset0}
\left\{
\varphi \in \mathcal{S}_H(B) \colon J_B(\varphi) \leq J_B(\phi_{+-}) + r_0
\right\} = \{\pm\phi_{+-}\}.
\end{equation}
\end{lemma}

\begin{proof}
We first note that $\phi_{+-}$ attains the infimum of the energy $J_B$ over $\mathcal{S}_H(B)$, since the positive solution on each ball takes the least energy among all nontrivial solutions on the ball. We next let $\phi_{\pm\mp} \in \mathcal{S}_H(B)$ coincide with a \emph{least-energy nodal solution} $\psi \in \mathcal{S}(B_+)$ in $B_+$, that is, $\psi \in \mathcal{S}(B_+)$  is sign-changing and attains the minimum value of $J_{B_+}$ among all sign-changing solutions in $B_+$ (see~\cite{AfPa04,BaWeWi05}). Here we note that $\psi$ takes the second minimum value of $J_{B_+}$ among $\mathcal{S}(B_+)$, since the positive solution is unique in the ball $B_+$. Then from the oddness of $\phi_{\pm\mp}$ it follows that 
\begin{equation}\label{odd}
\phi_{\pm\mp}(x) = - \phi_{\pm\mp}(\mathrm{Ref}_H(x)) \quad\mbox{ for } x \in B_-.
\end{equation}
Hence $\phi_{+-}$ and $\phi_{\pm\mp}$ take the first and second minimum values of the energy $J_{B}$ among $\mathcal{S}_H(B)$, respectively. We take $0 < r_0 < J_B(\phi_{\pm\mp}) - J_B(\phi_{+-})$. Then \eqref{Sset0} follows immediately.
\end{proof}

We further need the following:

\begin{lemma}\label{l1}
Let $n \in \N$ be large enough. The functions $\phi_n$ and $- \phi_n$ are minimizers of the functional $J_{\Omega_n}$ over the set $\mathcal{X}_H(\Omega_n)$. Moreover, it holds that $J_{\Omega_n}(w) > J_{\Omega_n}(\pm\phi_n)$ for any $w \in \mathcal{X}_H(\Omega_n) \setminus \{ \pm \phi_n\}$.
\end{lemma}

\begin{proof}
 We first claim that 
\begin{equation}\label{cl1}
\left\{
\varphi \in \mathcal{S}_H(\Omega_n) \colon J_{\Omega_n}(\varphi) \leq J_B(\phi_{+-}) + r_0
\right\} = \{\pm\phi_n\}
\end{equation}
for any $n \in \N$ large enough. Here $r_0$ is given as in \eqref{Sset0}. Indeed, recalling that $\phi_n \in \mathcal{S}(\Omega_n)$, $\phi_n \to \phi_{+-}$ strongly in $L^q(\tilde{B})$ as $n \to +\infty$ and $\phi_{+-} \in \mathcal{S}(B)$, we deduce that
\begin{align*}
J_{\Omega_n}(\phi_n) &= \frac{q-2}{2q} \lambda_q \|\phi_n\|_{L^q(\Omega_n)}^q = \frac{q-2}{2q} \lambda_q \|\phi_n\|_{L^q(\tilde{B})}^q\\
&\to \frac{q-2}{2q} \lambda_q \|\phi_{+-}\|_{L^q(\tilde{B})}^q =  J_B(\phi_{+-})
\end{align*}
as $n \to +\infty$. Hence we find that the set given by the left-hand side of \eqref{cl1} includes $\pm\phi_n$ for $n \in N$ large enough. Therefore it suffices to prove the inverse inclusion. Suppose to the contrary that, up to a (not relabeled) subsequence, there exists a sequence $(\varphi_n)$ in $\mathcal{S}_H(\Omega_n) \setminus \{\pm \phi_n\}$ such that
$$
J_{\Omega_n}(\varphi_n) \leq J_B(\phi_{+-}) + r_0.
$$
Then by~\cite[(ii) of Theorem 1]{Dancer1} we can take a (not relabeled) subsequence of $(n)$ and $\varphi \in \mathcal{S}_H(B) \cup \{0\}$ such that, for any $\vep > 0$, there exists $n_\vep \in \N$ satisfying
$$
J_B(\varphi) \leq J_B(\phi_{+-}) + r_0, \quad \|\varphi - \varphi_n\|_{H^1_0(\tilde{B})} < \vep
$$
for $n \in \N$ greater than $n_\vep$. One may rule out $\varphi = 0$. Indeed, if $\varphi = 0$, then $\varphi_n \to 0$ strongly in $H^1_0(\tilde{B})$ as $n \to +\infty$. On the other hand, we observe that
\begin{align*}
J_{\Omega_n}(\varphi_n) \geq \inf_{w \in \mathcal{S}(\Omega_n)} J_{\Omega_n}(w)
&= \frac{q-2}{2q} \Big[ \lambda_q C_q(\Omega_n)^q \Big]^{-2/(q-2)}\\
&\geq \frac{q-2}{2q} \left[ \lambda_q C_q(\tilde{B})^q \right]^{-2/(q-2)}
> 0, 
\end{align*}
where $C_q(\Omega_n)$ denotes the best constant of the Sobolev-Poincar\'e inequality \eqref{SI} with $\Omega = \Omega_n$ (see, e.g.,~\cite{Rabinowitz} and also~\cite[p.571]{AK13}). Here we also used the relation $C_q(\Omega_n) \leq C_q(\tilde{B})$. Hence it contradicts the fact that $J_{\Omega_n}(\varphi_n) = \frac{q-2}{2q} \|\nabla\varphi_n\|_{L^2(\Omega_n)}^2 \to 0$ as $n \to +\infty$. Thus we obtain $\varphi \neq 0$. Using \eqref{Sset0}, we can obtain either $\varphi = \phi_{+-}$ or $\varphi = - \phi_{+-}$. Hence $\varphi_n$ converges to either $\phi_{+-}$ or $-\phi_{+-}$ strongly in $H^1_0(\tilde{B})$ as $n \to +\infty$. However, due to~\cite[(ii) of Theorem 1]{Dancer1}, we infer that $\varphi_n$ coincides with either $\phi_n$ or $-\phi_n$, and this fact yields a contradiction to the assumption $\varphi_n \neq \pm \phi_n$. Thus \eqref{cl1} follows. Moreover, we can deduce that $J_{\Omega_n}$ is minimized over $\mathcal{S}_H(\Omega_n)$ by $\phi_n$ and $-\phi_n$ only. 

Finally, we shall prove that $\pm\phi_n$ also minimize $J_{\Omega_n}$ over $\mathcal{X}_H(\Omega_n)$. Let $v_{0,n} \in \mathcal{X}_H(\Omega_n)$ be such that $J_{\Omega_n}(v_{0,n}) \leq J_B(\phi_{+-}) + r_0$. Then the solution $v_n = v_n(x,s)$ to \eqref{eq:1.6}--\eqref{eq:1.8} with $\Omega = \Omega_n$ and $v_0 = v_{0,n}$ quasi-converges to a limit $\psi_n \in \mathcal{S}_H(\Omega_n)$ strongly in $H^1_0(\Omega_n)$ as $s \to +\infty$. Since the energy $s \mapsto J_{\Omega_n}(v_n(s))$ is non-increasing, it follows that
$$
J_{\Omega_n}(\psi_n) \leq J_{\Omega_n}(v_{0,n}) \leq J_B(\phi_{+-}) + r_0.
$$
By \eqref{cl1}, we obtain either $\psi_n = \phi_n$ or $\psi_n = -\phi_n$. Combining these facts, we deduce that $J_{\Omega_n}(\phi_n) \leq J_{\Omega_n}(v_{0,n})$. Hence $\pm \phi_n$ are minimizers of $J_{\Omega_n}$ over $\mathcal{X}_H(\Omega_n)$. Furthermore, if $v_{0,n} \in \mathcal{X}_H(\Omega_n)$ minimizes $J_{\Omega_n}$ over $\mathcal{X}_H(\Omega_n)$, that is, $J_{\Omega_n}(v_{0,n}) = J_{\Omega_n}(\phi_n)$, we obtain $v_{0,n} \in \mathcal{S}_H(\Omega_n)$. Indeed, we derive from \eqref{en} that
$$
c_q \int^s_0 \left\| \partial_s (|v_n|^{(q-2)/2}v_n)(s)\right\|_{L^2(\Omega_n)}^2 \, \d s + J_{\Omega_n}(v_n(s)) \leq J_{\Omega_n}(v_{0,n}),
$$
which along with the fact that $J_{\Omega_n}(\phi_n) = \inf_{w \in \mathcal{X}_H(\Omega_n)} J_{\Omega_n}(w)$ implies
$$
J_{\Omega_n}(v_n(s))  \equiv J_{\Omega_n}(v_{0,n}) \quad \mbox{and} \quad \partial_s (|v_n|^{(q-2)/2}v_n)(s) \equiv 0 \ \mbox{ a.e.~in } \Omega_n
$$
for $s \geq 0$. Hence $v_n(s) \equiv v_{0,n}$ and it solves \eqref{eq:1.10}, \eqref{eq:1.11} with $\Omega = \Omega_n$. Thus $v_{0,n}$ turns out to be an element of $\mathcal{S}_H(\Omega_n)$, and therefore, by \eqref{cl1}, $v_{0,n}$ coincides with either $\phi_n$ or $-\phi_n$. Consequently, we obtain $J_{\Omega_n}(w) > J_{\Omega_n}(\phi_n)$ if and only if
 $w \neq \pm\phi_n$.
\end{proof}

Now, we are ready to prove Theorem \ref{P:odd-stbl}, which can be proved along the same lines of Theorem 2 of~\cite{AK13} with the aid of lemmata proved so far. We provide here a proof for completeness.

\begin{proof}[Proof of Theorem \ref{P:odd-stbl}]
Since $\pm \phi_n$ are non-degenerate for $n \in \N$ large enough, they are isolated in $H^1_0(\Omega_n)$ from all the other non-trivial solutions for \eqref{eq:1.10}, \eqref{eq:1.11}. Hence let $r_n > 0$ be small enough that
\begin{equation}\label{111}
B_{\Omega_n}(\phi_n;r_n) \cap \mathcal{S}(\Omega_n) = \{\phi_n\},
\end{equation}
where $B_{\Omega_n}(\phi_n;r_n)$ denotes the ball in $H^1_0(\Omega_n)$ centered at $\phi_n$ with radius $r_n$. Let $\vep \in (0,r_n)$ be fixed. Then we claim that 
\begin{align}\label{cne}
c_{n,\vep} &:= \inf \left\{ J_{\Omega_n}(w) \colon w \in \mathcal{X}_H(\Omega_n), \ \|w - \phi_n\|_{H^1_0(\Omega_n)}  = \vep \right\} \nonumber\\
&> J_{\Omega_n}(\phi_n)
\end{align}
for $n \in \N$ large enough. Indeed, it has already been proved in Lemma \ref{l1} that $c_{n,\vep} \geq J_{\Omega_n}(\phi_n)$. Hence it suffices to show that $c_{n,\vep} \neq J_{\Omega_n}(\phi_n)$. Suppose to the contrary that $c_{n,\vep} = J_{\Omega_n}(\phi_n)$. Then there exists a sequence $(w_m)$ in $\mathcal{X}_H(\Omega_n)$ such that $J_{\Omega_n}(w_m) \to J_{\Omega_n}(\phi_n)$ and $\|w_m - \phi_n\|_{H^1_0(\Omega_n)} = \vep$. Hence we can extract a (not relabeled) subsequence of $(w_m)$ such that
$$
w_m \to \psi_n \quad \mbox{ weakly in } H^1_0(\Omega_n) \mbox{ and strongly in } L^q(\Omega_n)
$$
as $m \to +\infty$ for some $\psi_n \in H^1_0(\Omega_n)$. Since $\mathcal{X}_H(\Omega_n)$ is sequentially weakly closed in $H^1_0(\Omega_n)$, $\psi_n$ turns out to be an element of $\mathcal{X}_H(\Omega_n)$. It follows from Lemma \ref{l1} that $J_{\Omega_n}(\psi_n) \geq J_{\Omega_n}(\phi_n)$. Therefore we see that
\begin{align*}
 \frac 12 \|\nabla w_m\|_{L^2(\Omega_n)}^2
 &= J_{\Omega_n}(w_m) + \frac{\lambda_q}q \|w_m\|_{L^q(\Omega_n)}^q\\
 &\to J_{\Omega_n}(\phi_n) + \frac{\lambda_q}q \|\psi_n\|_{L^q(\Omega_n)}^q\\ 
 &\leq J_{\Omega_n}(\psi_n) + \frac{\lambda_q}q \|\psi_n\|_{L^q(\Omega_n)}^q
 = \frac 12 \|\nabla \psi_n\|_{L^2(\Omega_n)}^2.
\end{align*}
Thus we obtain
$$
w_m \to \psi_n \quad \mbox{ strongly in } H^1_0(\Omega_n)
$$
as $m \to +\infty$. Hence it follows that $J_{\Omega_n}(\psi_n) = J_{\Omega_n}(\phi_n)$ and $\|\psi_n - \phi_n\|_{H^1_0(\Omega_n)} = \vep \in (0,r_n)$; however, by virtue of Lemma \ref{l1}, they contradict each other. Thus we conclude that $c_{n,\vep} > J_{\Omega_n}(\phi_n)$.

Since $J_{\Omega_n}(\cdot)$ is continuous in $H^1_0(\Omega_n)$, one can take $\delta_{n,\vep} \in (0,\vep)$ such that
$$
J_{\Omega_n}(v_{0,n}) < c_{n,\vep}
$$
for any $v_{0,n} \in \mathcal{X}_H(\Omega_n)$ satisfying $\|v_{0,n}-\phi_n\|_{H^1_0(\Omega_n)} < \delta_{n,\vep}$. Hence let $v_{0,n} \in \mathcal{X}_H(\Omega_n)$ satisfy $\|v_{0,n}-\phi_n\|_{H^1_0(\Omega_n)} < \delta_{n,\vep}$ and let $v_n = v_n(x,s)$ be the weak solution to \eqref{eq:1.6}--\eqref{eq:1.8} with $\Omega = \Omega_n$ and $v_0 = v_{0,n}$. Since $s \mapsto J_{\Omega_n}(v_n(s))$ is non-increasing, we have 
$$
J_{\Omega_n}(v_n(s)) \leq J_{\Omega_n}(v_{0,n}) < c_{n,\vep}
$$
for any $s \geq 0$. Therefore, by virtue of \eqref{cne}, $v_n(s)$ cannot go beyond the boundary of the ball $B_{\Omega_n}(\phi_n;\vep)$ for any $s \geq 0$, that is, it holds that 
\begin{equation}\label{ep-nbhd}
\sup_{s \geq 0} \|v_n(s)-\phi_n\|_{H^1_0(\Omega_n)} \leq \vep
\end{equation}
(cf.~\cite[Proof of Theorem 3]{A16}). Thus $\phi_n$ turns out to be stable under the dynamical system in $\mathcal{X}_H(\Omega_n)$ generated by \eqref{eq:1.6}--\eqref{eq:1.8} with $\Omega = \Omega_n$. 

Furthermore, since each solution $v_n(s)$ of \eqref{eq:1.6}--\eqref{eq:1.8} with $\Omega = \Omega_n$ and $v_0 = v_{0,n} \in \mathcal{X}_H(\Omega_n)$ quasi-converges to an element of $\mathcal{S}_H(\Omega_n)$ strongly in $H^1_0(\Omega_n)$ as $s \to +\infty$ and $\phi_n$ is isolated in $H^1_0(\Omega_n)$ from all the other elements of $\mathcal{S}_H(\Omega_n)$ (see~\eqref{111}), we deduce from the stability of $\phi_n$ that $v_n(s) \to \phi_n$ strongly in $H^1_0(\Omega_n)$ as $s \to +\infty$, provided that $v_{0,n} \in \mathcal{X}_H(\Omega_n)$ and $\|v_{0,n} -\phi_n\|_{H^1_0(\Omega_n)}$ is small enough. Finally, the exponential stability follows from Theorem \ref{T:sc-conv}. This completes the proof.
\end{proof}

\begin{remark}[Positive and even asymptotic profiles in dumbbell domains]
The above argument can also be applied to positive and even (with respect to the hyperplane $H$) solutions on dumbbell domains with thin channels by replacing odd functions with even ones. 
\end{remark}

\section*{Acknowledgments}
The author wishes to thank anonymous referees for their careful reading and fruitful comments to improve the readability of the manuscript. The last section is inspired by referees' questions. The author is supported by JSPS KAKENHI Grant Numbers JP21KK0044, JP21K18581, JP20H01812, JP18K18715 and JP20H00117, JP17H01095. This work was also supported by the Research Institute for Mathematical Sciences, an International Joint Usage/Research Center located in Kyoto University.

\appendix

\section{Taylor's theorem}\label{S:Taylor}

In this section, we shall recall the well-known mean-value theorem as well as Taylor's theorem for operators in Banach spaces for the convenience of the reader. We refer the reader to, e.g.,~\cite{Zeidler,Willem} for details on Fr\'echet and G\^ateaux derivatives of operators defined on Banach spaces (see also Notation in $\S \ref{S:Intro}$). Let us start with the mean-value theorem.

\begin{theorem}[Mean-value theorem for operators]\label{T:MVT}
Let $x,y \in X$ and let $I = [x,y] = \{(1-\theta)x+\theta y \colon \theta \in [0,1]\}$. Let $U$ be an open set in $X$ such that $I \subset U$ and let $F : U \subset X \to Y$ be G\^ateaux differentiable on $I$ such that the G\^ateaux derivative $\D_G F : I \subset X \to \mathscr{L}(X,Y)$ of $F$ is bounded in $\mathscr{L}(X,Y)$ on $I$. Then it holds that
$$
\|F(y)-F(x)\|_Y \leq \sup_{\theta \in [0,1]} \left\| \D_G F((1-\theta)x+\theta y) \right\|_{\mathscr{L}(X,Y)} \|y-x\|_X.
$$
\end{theorem}

\begin{proof}
Let $\eta \in Y^*$ be such that $\|\eta\|_{Y^*} = 1$ and $\langle \eta , F(y) - F(x) \rangle_Y = \|F(y)-F(x)\|_Y$ (indeed, such an $\eta$ exists thanks to Hahn-Banach's theorem; see, e.g.,~\cite[Corollary 1.3]{B-FA}). Since $F$ is G\^ateaux differentiable on $I$, we see that $\theta \mapsto \varphi(\theta) := \langle \eta, F((1-\theta)x + \theta y)\rangle_Y$ is differentiable on $[0,1]$. Hence using the standard mean-value theorem, we can take $\theta_0 \in (0,1)$ such that $\varphi(1)-\varphi(0) = \varphi'(\theta_0) (1-0)$, that is,
\begin{align*}
\|F(y)-F(x)\|_Y &= \langle \eta, [\D_G F((1-\theta_0)x + \theta_0 y)](y-x) \rangle_Y\\
&\leq \|[\D_G F((1-\theta_0)x + \theta_0 y)](y-x)\|_Y\\
&\leq \|\D_G F((1-\theta_0)x+\theta_0 y)\|_{\mathscr{L}(X,Y)} \|y-x\|_X\\
&\leq \sup_{\theta \in [0,1]}\|\D_G F((1-\theta)x+\theta y)\|_{\mathscr{L}(X,Y)} \|y-x\|_X.
\end{align*}
This completes the proof.
\end{proof}

Here and henceforth, for each $j \in \N$, $T \in \mathscr{L}^{(j)}(X,Y)$ and $x \in X$, we shall simply write $T(x,x,\ldots,x) = T x^j$. 

\begin{theorem}[Taylor's theorem for operators]\label{T:Taylor}
Let $x,y \in X$ and let $I = [x,y] = \{(1-\theta)x+\theta y \colon \theta \in [0,1]\}$. Let $U$ be an open set in $X$ such that $I \subset U$ and let $F : U \subset X \to Y$ be $(n-1)$-times Fr\'echet differentiable in $U$ such that the $(n-1)$-th Fr\'echet derivative $F^{(n-1)} : U \subset X \to \mathscr{L}^{(n-1)}(X,Y)$ of $F$ is G\^ateaux differentiable on $I$ and the G\^ateaux derivative $\D_G F^{(n-1)}$ of $F^{(n-1)}$ is bounded in $\mathscr{L}^{(n)}(X,Y)$ on $I$. Then it holds that
\begin{equation}\label{taylor}
 F(y) = F(x) + \sum_{j=1}^{n-1} \dfrac{[F^{(j)}(x)]}{j!}(x-y)^j + e,
\end{equation}
where $e \in Y$ satisfies
$$
\|e\|_Y \leq \sup_{\theta \in [0,1]} \|\D_G F^{(n-1)}((1-\theta)x+\theta y)\|_{\mathscr{L}^{(n)}(X,Y)} \|y-x\|_X^n.
$$
\end{theorem}

\begin{proof}
Set
$$
P_{n-1}(w) = \sum_{j=0}^{n-1} \frac{[F^{(j)}(x)]}{j!}(w-x)^j
$$
and
$$
G(w) = F(w) - P_{n-1}(w)
$$
for $w \in U$. Then $G$ is $(n-1)$-times Fr\'echet differentiable on $U$ such that
\begin{equation}\label{MO}
G^{(\ell)}(x) = 0 \quad \mbox{ for } \ \ell = 0,1,\ldots,n-1.
\end{equation}
Moreover, by assumption, $G^{(n-1)}$ is G\^ateaux differentiable on $I$ and $\sup_{t \in [0,1]} \| \D_G G^{(n-1)}([x,y]_t)\|_{\mathscr{L}^{(n)}(X,Y)} < +\infty$. In what follows, we write $[x,y]_t = (1-t)x + ty$ and note that $[x,[x,y]_t]_s = [x,y]_{st}$ for $s,t \in [0,1]$. Moreover, using \eqref{MO} and Theorem \ref{T:MVT} repeatedly, we see that
\begin{align}
 \|G(y)\|_Y
 &= \|G(y) - G(x)\|_Y\nonumber\\
 &\leq \sup_{t_1 \in [0,1]} \|G'([x,y]_{t_1})\|_{\mathscr{L}(X,Y)} \|y-x\|_X\nonumber\\
 &= \sup_{t_1 \in [0,1]} \|G'([x,y]_{t_1}) - G'(x)\|_{\mathscr{L}(X,Y)} \|y-x\|_X\nonumber\\
 &\leq \sup_{t_1 \in [0,1]} \sup_{t_2 \in [0,1]} \|G''([x,[x,y]_{t_1}]_{t_2}) \|_{\mathscr{L}^{(2)}(X,Y)} t_1 \|y-x\|_X^2\nonumber\\
 &\leq \sup_{t \in [0,1]} \|G''([x,y]_t) \|_{\mathscr{L}^{(2)}(X,Y)} \|y-x\|_X^2\nonumber\\
 &\leq \sup_{t \in [0,1]} \|G^{(n-1)}([x,y]_t) \|_{\mathscr{L}^{(n-1)}(X,Y)} \|y-x\|_X^{n-1}\label{(n-1)}\\
 &\leq \sup_{t \in [0,1]} \| \D_G G^{(n-1)}([x,y]_t)\|_{\mathscr{L}^{(n)}(X,Y)} \|y-x\|_X^n,\nonumber
\end{align}
which ensures the desired assertion for $e = G(y)$. This completes the proof.
\end{proof}

\begin{remark}\label{R:Taylor}
If $F : U \subset X \to Y$ is only of class $C^{n-1}$ in $U$ in the sense of Fr\'echet derivative, then we can still obtain \eqref{taylor} along with $e \in Y$ satisfying only
$$
\lim_{\|y-x\|_X \to 0} \frac{\|e\|_Y}{\|y-x\|_X^{n-1}} = 0.
$$
Indeed, as in \eqref{(n-1)}, we can derive from the continuity of $G^{(n-1)}$ that
\begin{align*}
\frac{\|G(y)\|_Y}{\|y-x\|_X^{n-1}}
 &\leq \sup_{t \in [0,1]} \|G^{(n-1)}([x,y]_t) - G^{(n-1)}(x)\|_{\mathscr{L}^{(n-1)}(X,Y)} \\
&\to 0 \quad \mbox{ as } \ \|x-y\|_X \to 0.
\end{align*}
Setting $e = G(y)$, we obtain the desired conclusion.
\end{remark}

Finally, we shall give a proof for the fact that $J$ is of class $C^3$ in $H^1_0(\Omega)$, provided that $q \geq 3$. Let $w \in H^1_0(\Omega)$ be arbitrarily fixed. It is well known that $J$ is of class $C^2$ in $H^1_0(\Omega)$ and its second Fr\'echet derivative $J''(w) \in \mathscr{L}(H^1_0(\Omega),H^{-1}(\Omega)) = \mathscr{L}^{(2)}(H^1_0(\Omega),\R)$ at $w$ is represented by
$$
[J''(w)](u, v) = \int_\Omega \nabla u \cdot \nabla v \, \d x - \lambda_q (q-1) \int_\Omega |w|^{q-2} uv \, \d x
$$
for $u,v \in H^1_0(\Omega)$ (see, e.g.,~\cite[Corollary 1.13]{Willem}). Therefore since $q \geq 3$, we can see that
\begin{align*}
\MoveEqLeft{
\left\| \frac{J''(w+ he) - J''(w)}{h} + \lambda_q (q-1)(q-2) |w|^{q-4}w e \right\|_{\mathscr{L}^{(2)}(H^1_0(\Omega),\R)}
}\\
&\leq C \lambda_q (q-1) \left\| \frac{|w+he|^{q-2}-|w|^{q-2}}h - (q-2) |w|^{q-4}w e \right\|_{L^{q/(q-2)}(\Omega)}\\
&\to 0 \quad \mbox{ as } \ h \to 0
\end{align*}
for $e \in H^1_0(\Omega)$. Thus $J'' : H^1_0(\Omega) \to \mathscr{L}^{(2)}(H^1_0(\Omega);\R)$ is G\^ateaux differentiable at $w$ and its derivative $\D_G J''(w) \in \mathscr{L}^{(3)}(H^1_0(\Omega);\R)$ at $w$ is represented as
$$
[\D_G J''(w)](e,u,v) = \lambda_q (q-1)(q-2) \int_\Omega |w|^{q-4}w e u v \, \d x
$$
for $e,u,v \in H^1_0(\Omega)$. Moreover, one can check from $q \geq 3$ that $w \mapsto \D_G J''(w)$ is a continuous map from $H^1_0(\Omega)$ into $\mathscr{L}^{(3)}(H^1_0(\Omega),\R)$, and therefore, $J''$ also turns out to be Fr\'echet differentiable at $w$ and its Fr\'echet derivative $J^{(3)}(w)$ at $w$ coincides with $\D_G J''(w)$.

\section{Fundamental inequalities}\label{S:fe}

We first prove \eqref{fundame}. We can assume that $a > b$ and $a,b \neq 0$ without  loss of generality. We see that
 \begin{align*}
 \left(
 |a|^{q-2}a - |b|^{q-2}b
 \right) (a - b)
 &= \left(
 \int^a_b \frac{\d}{\d\xi} (|\xi|^{q-2} \xi) \, \d\xi
 \right)
 \left(
 \int^a_b 1 
\, \d\xi
 \right)
 \\
 &\geq \left[
 \int^a_b 
 \left(
 \frac{\d}{\d\xi} (|\xi|^{q-2} \xi)
 \right)^{1/2}
  1^{1/2}
 \,\d\xi
 \right]^2\\
  &= (q-1)\left[
  \int^a_b 
  |\xi|^{\frac{q-2}2}
 \d\xi
 \right]^2\\
  &= (q-1)\left[
  \int^a_b 
  \frac 2 q \frac{\d}{\d \xi} (|\xi|^{\frac{q-2}2}\xi)
\, \d\xi
 \right]^2\\
&= \frac{4(q-1)}{q^2} \left| |a|^{\frac{q-2}2}a - |b|^{\frac{q-2}2}b \right|^2.
\end{align*}

Inequality \eqref{fun_ineq} is standard. We next prove \eqref{sublin-ineq2}. In case $a$ and $b$ have the same sign, \eqref{fun_ineq} is applicable. In case $a$ and $b$ have different signs, we may simply assume $a > 0 > b$. Set $c = -b > 0$. Since $p \in (0,1)$, we see that
$$
\frac{a^p+c^p}2 \leq \left( \frac{a+c}2\right)^p,
$$
which yields
$$
a^p + c^p \leq 2^{1-p} (a+c)^p.
$$
It follows that
\begin{align*}
 \frac{a^p+c^p}{a+c} \leq 2^{1-p} (a+c)^{p-1} \leq 2^{1-p} a^{p-1}.
\end{align*}
Thus we have \eqref{sublin-ineq2}.

\end{document}